\documentclass{amsart}

\usepackage[all]{xy}
\usepackage{amsmath}
\usepackage{hyperref}
\usepackage{amsfonts,graphics,amsthm,amsfonts,amscd,latexsym}
\usepackage{epsfig}
\usepackage{flafter}
\usepackage{mathtools}
\usepackage{soul}

\hypersetup{
    colorlinks=true,    
    linkcolor=blue,          
    citecolor=blue,      
    filecolor=blue,      
    urlcolor=blue           
}
\usepackage{tikz}
\usetikzlibrary{graphs,positioning,arrows,shapes.misc,decorations.pathmorphing}

\tikzset{
    >=stealth,
    every picture/.style={thick},
    graphs/every graph/.style={empty nodes},
}

\tikzstyle{vertex}=[
    draw,
    circle,
    fill=black,
    inner sep=1pt,
    minimum width=5pt,
]
\usepackage[position=top]{subfig}
\usepackage[backrefs]{amsrefs}
\usepackage{amssymb}
\usepackage{color}

\setlength{\textwidth}{\paperwidth}
\addtolength{\textwidth}{-2in}
\calclayout

\usetikzlibrary{decorations.pathmorphing}
\tikzstyle{printersafe}=[decoration={snake,amplitude=0pt}]

\newcommand{\codim}{\operatorname{codim}}

\newcommand{\Spec}{\operatorname{Spec}}

 \newcommand{\rddown}[1]{\left\lfloor{#1}\right\rfloor}

\renewcommand{\qq}{\mathbb{Q}}
\newcommand{\zz}{\mathbb{Z}}
\newcommand{\nn}{\mathbb{N}}
\newcommand{\rr}{\mathbb{R}}

\newcommand{\bQ}{\mathbb{Q}}

\def\O#1.{\mathcal {O}_{#1}}			
\def\pr #1.{\mathbb P^{#1}}				
\def\af #1.{\mathbb A^{#1}}				
\def\ses#1.#2.#3.{0\to #1\to #2\to #3 \to 0}		
\def\xrar#1.{\xrightarrow{#1}}			
\def\K#1.{K_{#1}}						
\def\bA#1.{\mathbf{A}_{#1}}				
\def\bM#1.{\mathbf{M}_{#1}}				
\def\bL#1.{\mathbf{L}_{#1}}				
\def\bB#1.{\mathbf{B}_{#1}}				
\def\bK#1.{\mathbf{K}_{#1}}				
\def\subs#1.{_{#1}}						
\def\sups#1.{^{#1}}						


\DeclareMathOperator{\Supp}{Supp}		

\DeclareMathOperator{\Nlc}{Nlc}
\DeclareMathOperator{\Nklt}{Nklt}	
\DeclareMathOperator{\chara}{char}	

\newcommand{\rar}{\rightarrow}		
\newcommand{\drar}{\dashrightarrow}	

\usepackage{tikz}
\usetikzlibrary{matrix,arrows,decorations.pathmorphing}

  \newtheorem*{ks.princ}{
  Connectedness Principle}
  \newtheorem{theorem}{Theorem}[section]
  \newtheorem{lemma}[theorem]{Lemma}
  \newtheorem{proposition}[theorem]{Proposition}
  \newtheorem{corollary}[theorem]{Corollary}
  
  \newtheorem{conjecture}[theorem]{Conjecture}

\theoremstyle{definition}
  
  \newtheorem{definition}[theorem]{Definition}
  \newtheorem{example}[theorem]{Example}

\newtheorem{remark}[theorem]{Remark}

\theoremstyle{remark}

\numberwithin{equation}{section}

\makeatletter
\@namedef{subjclassname@2020}{
\textup{2020} Mathematics Subject Classification}
\makeatother

\begin{document}

\title{Connectedness principle for $3$-folds in characteristic $p>5$}

\author[S. Filipazzi]{Stefano Filipazzi}
\address{
EPFL, SB MATH CAG, MA C3 625 (B\^{a}timent MA), Station 8, CH-1015 Lausanne, Switzerland
}
\email{stefano.filipazzi@epfl.ch}

\author[J.~Waldron]{Joe Waldron}
\address{Department of Mathematics,
Michigan State University,
619 Red Cedar Road,
East Lansing, MI 48824, USA}
\email{waldro51@msu.edu}

\subjclass[2020]{Primary 14E30, 
Secondary 14F45.}
\keywords{Non-klt locus, threefolds, positive characteristic, mixed characteristic.}

\begin{abstract}
A conjecture, known as the Shokurov--Koll\'ar connectedness principle, predicts the following.
Let $(X,B)$ be a pair, and let $f \colon X \rar S$ be a contraction with $-(\K X. + B)$ nef over $S$;
then, for any point $s \in S$, the intersection $f \sups -1. (s) \cap \Nklt(X,B)$ has at most two connected components, where $\Nklt(X,B)$ denotes the non-klt locus of $(X,B)$.
This conjecture has been extensively studied in characteristic zero, and it has been recently settled in that context.
In this work, we consider this conjecture in the setup of positive characteristic algebraic geometry.
We prove this conjecture holds for threefolds in positive and mixed characteristic, where the residue fields are assumed to have characteristic $p>5$.
Under the same assumptions, we characterize the cases in which $\Nklt(X,B)$ fails to be connected.
\end{abstract}

\maketitle

\section{Introduction}

In algebraic geometry, one of the main topics of research is the study of the singularities of an algebraic variety.
One can approach the study of singularities from two complementary perspectives.
On the one hand, one can consider the germ of a singularity $x \in X$, and study its local properties.
On the other hand, one can consider a proper variety $Y$ and analyze its singularities.

From the perspective of birational geometry and the minimal model program, log canonical singularities are the broadest class of singularities that are allowed on a normal variety $X$, or, more generally, on a pair $(X,B)$.
This class of singularities admits a distinguished sub-class of singularities, called Kawamata log terminal singularities (klt for short).
This sub-class is significantly better behaved both from a geometric and cohomological perspective.
For this reason, on any given variety $X$, it is relevant to analyze the locus where $X$ fails to be klt.
More generally, given a pair $(X,B)$, it is of great importance to study the non-klt locus $\Nklt(X,B)$, that is, the locus where the pair $(X,B)$ has singularities worse than klt.

In general, the non-klt locus can be arbitrarily complicated.
On the other hand, under some natural assumptions on the positivity of the log canonical divisor $K_X + B$, it is expected that the non-klt locus of a pair $(X,B)$ has precise topological properties.
More precisely, we have the following principle, known as the Shokurov--Koll\'ar connectedness principle.

\begin{ks.princ}
Let $(X,B)$ be a pair.
Let $f \colon X \rar S$ be a contraction. 
If $-(\K X.+ B)$ is $f$-nef and $f$-big, then $\Nklt(X,B)$ is connected in a neighborhood of any fiber of $f$.
\end{ks.princ}

Recall that a morphism $f \colon X\to S$ is a contraction if it is projective and $f_*\mathcal{O}_X=\mathcal{O}_S$, which in particular implies that it has connected fibers.
This principle is expected to have extensions to the case where $-(K_X+B)$ is only nef, where $\mathrm{Nklt}(X,B)\cap f^{-1}(s)$ is expected to have at most two connected components.
In particular, the following statement has been conjectured.

\begin{conjecture} 
\label{conjecture.connectedness}
Let $(X,B)$ be a pair, and let $f \colon X \rar S$ be a contraction such that $-(\K X. + B)$ is $f$-nef.
Then, for any $s \in S$, $f \sups -1. (s) \cap \Nklt(X,B)$ has at most two connected components.
\end{conjecture}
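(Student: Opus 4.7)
The plan is to pass to a $\mathbb{Q}$-factorial dlt modification, reducing the question to counting connected components of the round-down of the boundary, and then to run a perturbation MMP over $S$ to reach either a log minimal model or a Mori fiber space, in each of which the number of components in any fiber of $S$ can be bounded by two.

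First I would reduce to the dlt case. Using the existence of dlt modifications for threefolds in characteristic $p>5$ (available via the threefold MMP of Hacon--Xu, Birkar, and Waldron), take $g\colon(Y,\Gamma)\to(X,B)$ with $\K Y.+\Gamma=g^*(\K X.+B)$ and $(Y,\Gamma)$ $\mathbb{Q}$-factorial dlt. Then $\Nklt(Y,\Gamma)=\lfloor\Gamma\rfloor$, and the connected components of $\Nklt(X,B)\cap f^{-1}(s)$ correspond bijectively to those of $\lfloor\Gamma\rfloor\cap h^{-1}(s)$, where $h=f\circ g$.

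Second, suppose for contradiction that $\lfloor\Gamma\rfloor\cap h^{-1}(s)$ has at least three connected components. For small rational $\epsilon>0$ set $\Delta_\epsilon=\Gamma-\epsilon\lfloor\Gamma\rfloor$, so $(Y,\Delta_\epsilon)$ is klt and
\[-(\K Y.+\Delta_\epsilon)=-(\K Y.+\Gamma)+\epsilon\lfloor\Gamma\rfloor\]
is the sum of an $h$-nef divisor and an effective divisor. I would run the $(\K Y.+\Delta_\epsilon)$-MMP over $S$ with scaling of an ample divisor; this terminates by threefold termination results in characteristic $p>5$. On each contracted extremal ray, the inequalities $-(\K Y.+\Gamma)\cdot R\le 0$ and $(\K Y.+\Delta_\epsilon)\cdot R<0$ together force $\lfloor\Gamma\rfloor\cdot R>0$, so each MMP step genuinely affects $\lfloor\Gamma\rfloor$, and the evolution of its components in $h^{-1}(s)$ can be tracked.

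Third, I would analyze the output. In the log minimal model case, $\K{Y'}.+\Delta_\epsilon'$ is $h'$-nef, and combining this with effectivity of $\lfloor\Gamma'\rfloor$ together with a length-of-extremal-rays and negativity-lemma argument should collapse the three components to at most two, a contradiction. In the Mori fiber space case $\psi\colon Y'\to Z/S$, the divisor $-(\K{Y'}.+\Delta_\epsilon')$ is $\psi$-ample, so $\lfloor\Gamma'\rfloor$ dominates $Z$; applying the classical Shokurov--Koll\'ar connectedness principle (valid when $-(\K X.+B)$ is nef and big) to the general fiber of $\psi$, the divisor $\lfloor\Gamma'\rfloor$ can have at most two horizontal components over $Z$, and analyzing the map $Z\to S$ above $s$ produces the bound of two connected components in the original fiber.

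The hard part will be controlling the MMP in positive characteristic. In characteristic zero the standard proof of the nef-and-big version rests on Kawamata--Viehweg vanishing applied to an appropriate twist of $\mathcal{I}_{\lfloor\Gamma\rfloor}$; this vanishing fails in characteristic $p>0$ in general and must be replaced by weaker substitutes such as the Hacon--Witaszek and Bernasconi vanishing theorems, Keel's semi-ampleness, or Fujita-type vanishing, all of which are available in the threefold, $p>5$ regime. A second technical subtlety is that in positive characteristic the round-down $\lfloor\Gamma\rfloor$ of a dlt boundary need not be semi-normal, so tracking scheme-theoretic connected components through MMP steps requires additional care; I expect this is what ultimately forces the hypotheses on dimension and characteristic.
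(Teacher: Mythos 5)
Your outline---pass to a $\mathbb{Q}$-factorial dlt model, run a suitable MMP over $S$, and analyze the resulting minimal model or Mori fiber space---is broadly the paper's strategy, but two load-bearing steps are wrong or missing. First, the positivity claim for your perturbation rests on a sign error: $f$-nefness of $-(K_Y+\Gamma)$ gives $-(K_Y+\Gamma)\cdot R\ge 0$, not $\le 0$, for a ray $R$ contracted over $S$. With the correct sign, $\epsilon\lfloor\Gamma\rfloor\cdot R=(K_Y+\Gamma)\cdot R-(K_Y+\Delta_\epsilon)\cdot R$ is the sum of a nonpositive and a positive number, so its sign is undetermined: steps of your $(K_Y+\Delta_\epsilon)$-MMP may avoid $\lfloor\Gamma\rfloor$ entirely, and nothing prevents them from contracting pieces of the non-klt locus or joining two of its components (even genuine $\lfloor\Gamma\rfloor$-positivity would not by itself exclude merging). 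The paper fixes both issues by keeping $M=-(K_Y+B_Y)$ as the nef part of a generalized pair and running the $(K_Y+\Delta+M)$-MMP with $\Delta=B_Y-B_Y^{\ge 1}$; since $K_Y+\Delta+M\sim_{\mathbb{Q}}-B_Y^{\ge 1}$, that MMP is genuinely $B_Y^{\ge 1}$-positive, and the statement that it cannot connect distinct components of $\Nklt$ is a separate proposition (Proposition \ref{MMp gpair connectedness}, which rests on the Nakamura--Tanaka relative big-and-nef connectedness theorem), not a formal consequence of positivity.

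The more serious gap is your minimal model branch, which is precisely the case in which $\Nklt(X,B)$ is vertical over $S$: if some component of $\lfloor\Gamma\rfloor$ dominates $S$, a covering-curve computation shows $K_Y+\Delta_\epsilon$ is not pseudo-effective over $S$, so only the Mori fiber space outcome can occur, whereas in the vertical case your appeal to a length-of-extremal-rays and negativity argument is not yet an argument. This vertical case is where the paper does substantial work: Propositions \ref{prop:nklt_dominates} and \ref{prop:nlc_vertical} reduce to the non-lc locus, run a generalized-pair MMP that is positive against it, invoke special termination for generalized dlt pairs (Proposition \ref{prop:special_termination}) together with a plt-preservation trick to show components neither merge nor disappear, and conclude with a nefness argument showing each fiber is contained in or disjoint from the locus. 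In the Mori fiber space case your idea is in the right spirit, but you would still need to handle vertical components of $\lfloor\Gamma'\rfloor$ meeting the fiber over $s$ and to know the fibers are rational curves (the paper deduces this from the existence of two disjoint relatively ample horizontal divisors and then concludes via the degree-$2$ computation on $\mathbb{P}^1$), rather than from a connectedness statement on general fibers alone. Finally, the difficulty is not really a substitute for Kawamata--Viehweg vanishing: the positive-characteristic inputs the paper needs are the Nakamura--Tanaka connectedness theorem, relative termination statements, and special termination for generalized pairs.
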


The connectedness principle was originally formulated in characteristic zero, and it dates back to \cite{Sho}*{5.7}, where Shokurov considered the case of surface pairs $(X,B)$ with $-(K_X+B)$ ample.
Subsequently, this was generalized by Koll\'ar to higher dimensions \cite{Kol92}*{Theorem 17.4}.
Recently, the connectedness principle has been fully settled in characteristic zero, see \cites{B20,filipazzi_svaldi}.

In positive characteristic, the study of the connectedness principle is more recent and less understood.
In particular, all the known results are in dimension up to 3.
In the case of threefolds, the case where $f$ is birational and $-(K_X+B)$ is $f$-ample first appeared in work of Birkar, as part of the construction of the threefold MMP in characteristic $p>5$ in \cite{birkar_p}*{Theorem 1.8}.
Furthermore, in the case of surfaces, Birkar completely settled the case when $-(K_X+B)$ is relatively ample, see \cite{birkar_p}*{Theorem 9.3}.
These results were later extended by Nakamura and Tanaka, who fully settled the case where $-(K_X+B)$ is $f$-big and $f$-nef in dimension up to 3, see \cite{NT20}*{Theorem 1.2}.

Building on these pioneering works in positive characteristic, in this note we fully settle Conjecture \ref{conjecture.connectedness} for threefolds over perfect fields of characteristic $p > 5$, and we fully characterize the case when the non-klt locus fails to be connected.
Furthermore, in view of the recent developments in the MMP in mixed characteristic \cite{BMP+}, we are able to extend the connectedness principle to threefolds in mixed characteristic, and also partially to threefolds over imperfect fields.
In particular, our main statement is the following.

\begin{theorem}\label{thm:main-thm}
Let $R$ be an excellent domain of finite Krull dimension, which admits a dualizing complex, and all of whose closed points have characteristic zero or $p>5$.
Let $f \colon X \rar S$ be a projective morphism between varieties that are quasi-projective over $\Spec(R)$ such that $f(X)$ is not a closed point with imperfect residue field.
Assume that $\dim X \leq 3$ and that $(X,B)$ is a pair with rational coefficients and $-(\K X. + B)$ nef over $S$.
Fix $s \in S$, and assume $f^{-1}(s)$ is connected as $k(s)$-scheme.
Then, if $f \sups -1. (s) \cap \Nklt(X,B)$ is disconnected as $k(s)$-scheme, the following holds:
	\begin{itemize}
		\item[(1)] $(X,B)$ is log canonical in a neighborhood of $f^{-1} (s)$;
	    \item[(2)] $f^{-1}(s)\cap \Nklt(X,B)$ has two connected components, and, {after an \'etale base change, $\Nklt(X,B)$ has two connected components over $s$, and each of the components of $f^{-1}(s)\cap \Nklt(X,B)$ corresponds to one of the components of $\Nklt(X,B)$}; and
		\item[(3)] there is an \'etale morphism $(s' \in S') \rar (s \in S)$ and a projective morphism $T' \rar S'$ such that $k(s')=k(s)$ and the crepant pull-back of $(X,B)$ to $X \times_S S'$ is birational to a weak $\pr 1.$-link over $T'$.
	\end{itemize}
{Furthermore, if $R$ is a perfect field of characteristic $p>5$, the weak $\pr 1.$-link in (3) is a standard $\pr 1.$-link.}
\end{theorem}

\begin{remark}
	The two restrictions about perfect fields in Theorem \ref{thm:main-thm} occur for the following reasons:
	\begin{enumerate}
		\item We exclude the case of $f(X)$ being a closed point with imperfect field since the full base point free theorem and termination with scaling are not known in this situation.
		If those things are proved, the initial restriction could be removed and we would recover (1), (2), and (3) in that situation.
		\item The restriction in the ``furthermore'' statement occurs because passing from a weak $\mathbb{P}^1$-link to a standard $\mathbb{P}^1$-link requires vanishing theorems which are only proved for threefolds over perfect fields \cite{Ber20}.
		We believe that these vanishing theorems are also valid by essentially the same proofs in mixed characteristic if we assume that all residue fields are perfect, but we have not checked this.
Obtaining a standard $\mathbb{P}^1$-link from a weak $\mathbb{P}^1$-link is the only place we use vanishing theorems in this paper.
\end{enumerate}
\end{remark}

\begin{remark}
{In the context of part (2) of Theorem \ref{thm:main-thm}, the \'etale base change is necessary to have a one-to-one correspondence between the two connected components of $f^{-1}(s)\cap \Nklt(X,B)$ and the connected components of $\Nklt(X,B)$ locally over $s$.
An example, kindly suggested by the anonymous referee, is the following.
Consider the pair $(X,B)$ with $X=\mathbb{A}^1_s \times \mathbb{P}^1_{[x:y]}$ and $B=\{ sy^2=x^2\}$.
Then, over $s \neq 0$, $f^{-1}(s)\cap \Nklt(X,B)$ consists of two points, while $\Nklt(X,B)=B$ is connected.
Then, after the base change induced by $s \mapsto s^2$, which is \'etale over $s \neq 0$, the pull-back of $B$ splits as two sections of the fibration.}
\end{remark}

\begin{remark} \label{rmk_intro}
In \cite{hacon_han}, Hacon and Han reduced Conjecture \ref{conjecture.connectedness} in dimension $n$ to the termination of an arbitrary sequence of flips in dimension $n$.
Although they work in characteristic zero, their reduction also works in positive and mixed characteristic.
Furthermore, in dimensions three and four, they require only the termination from \cite{ahk}. 
After a first version of this work was completed, Hacon and Witaszek generalized many of the results of \cite{ahk} to positive characteristic, see \cite{hacon_witaszek}.
Therefore, an alternative proof of Theorem \ref{thm:main-thm} is possible by combining \cite{hacon_han} and \cite{hacon_witaszek}.
We point out that this alternative proof extends the validity of Theorem \ref{thm:main-thm} to real coefficients.

However, our proof is different from theirs, and it essentially reduces Conjecture \ref{conjecture.connectedness} to the termination of an arbitrary sequence of flips in dimension $n-1$.
 Therefore, it may be of independent interest.
 The authors believe that the argument here could be adapted to give an alternative proof of the $5$-dimensional case in characteristic zero.
 We do not pursue this in view of the recent works \cites{B20,filipazzi_svaldi}.
 Furthermore, we believe that our approach could prove itself useful to tackle the general case in positive characteristic, granted the needed developments of the MMP in that direction.
\end{remark}

The analysis of the disconnected case in Theorem \ref{thm:main-thm} has immediate applications to the study of the structure of log canonical centers on a variety.
In particular, the following corollaries follow immediately from Theorem \ref{thm:main-thm} by the same proofs as in \cite{kollar_singularities}*{\S4.4}.

\begin{corollary}\label{P1_link}
{Let $R$ be an excellent domain of finite Krull dimension, which admits a dualizing complex, and all of whose closed points have characteristic zero or $p>5$.
In case $R$ is a field of positive characteristic, we further assume it is perfect.}
Let $(X,B)$ be a dlt pair with rational coefficients with $\dim X \leq 3$ and defined over $\Spec(R)$.
Let $f \colon X \rar S$ be a projective dominant morphism such that $-(\K X. + B)$ is $f$-nef.
Fix $s \in S$ such that $f \sups -1. (s)$ is connected as $k(s)$-scheme.
Let $Z \subset X$ be minimal (with respect to inclusion) among the log canonical centers such that $s \in f (Z)$.
Let $W$ be a log canonical center such that $s \in f (W)$.
Then, there exists a log canonical center $Z_W \subset W$ such that $Z$ and $Z_W$ are {weakly} $\pr 1.$-linked and $s \in f(Z_W)$.
In particular, all the minimal (with respect to inclusion) log canonical centers $Z_i \subset X$ such that $s \in f (Z_i)$ are {weakly} $\pr 1.$-linked to each other.
{Furthermore, if $R$ is a perfect field of characteristic $p>5$, ``weakly'' can be dropped.}
\end{corollary}
For a proper morphism $f \colon X\to S$ we say that $W\subset S$ is a log canonical center for $(X,B)$ with respect to $f$ if $W=f(Z)$ for some log canonical center $Z$ of $(X,B)$.

\begin{corollary}\label{kollar_441}
Let $R$ be an excellent domain of finite Krull dimension, which admits a dualizing complex, and all of whose closed points have characteristic zero or $p>5$.
In case $R$ is a field of positive characteristic, we further assume it is perfect.
Let $f \colon X \rar S$ be a contraction  between varieties defined over $\Spec(R)$.
Assume that $\dim X \leq 3$ and that $(X,B)$ is a log canonical pair with rational coefficients and $-(\K X. + B)$ nef over $S$. Then:
\begin{itemize}
\item every point $s\in S$ is contained in a unique smallest lc center $W_s\subset S$, which is unibranch at $s$; and
\item 
any intersection of log canonical centers is also a union of log canonical centers.
\end{itemize}
\end{corollary}

\subsection*{Structure of the paper}
{
In \S\ref{sect_prelim}, we first recall the notion of generalized pair.
Since this notion is exploited in positive {and mixed} characteristic for the first time, we settle some technical results about generalized pairs for 3-folds in this setting.
In particular, we prove an adjunction result, see Lemma \ref{lemma_coeff_dcc}, which is crucial in the proof of the key statement of \S\ref{sect_prelim}, namely Proposition \ref{prop:special_termination}.
The latter is a version of special termination for generalized dlt 3-folds in positive characteristic.
This is the statement that allows our proof strategy to rely on termination in dimension one less, as mentioned in Remark \ref{rmk_intro}.
Then, in \S\ref{sec3}, we prove the main results of this work.
In \S\ref{sec3.1}, we show that the number of connected components of $\Nklt(X,B)$ is preserved by a suitable run of the MMP.
This allows us to rely on the tools from the MMP for our proof strategy.
Then, in \S\ref{sec3.2}, we prove a special case of connectedness, under the assumption that $(X,B)$ is not lc over $s \in S$.
More precisely, we show that $f\sups -1. (s) \cap \Nklt(X,B)$ is connected under these assumptions.
This is the main technical step and it relies on special termination.
Finally, in \S\ref{sec3.3}, we prove the main statements.
In Proposition \ref{prop:nklt_dominates}, we show that, if $f \sups -1. (s) \cap \Nklt(X,B)$ is disconnected, then $\Nklt(X,B)$ dominates the base $S$.
This statement is proved by reduction to the non-lc case, which had been treated in \ref{sec3.2}.
Once Proposition \ref{prop:nklt_dominates} is settled, we can prove Theorem \ref{thm:main-thm}, closely following the original approach of \cite{kollar_singularities}*{Proposition 4.37}.
Lastly, Corollary \ref{P1_link} and Corollary \ref{kollar_441} immediately follow from Theorem \ref{thm:main-thm} by the arguments in \cite{kollar_singularities}*{\S4.4}.
}

\subsection*{Acknowledgements} The authors wish to thank Christopher Hacon for feedback on an earlier version of this work, and for explaining to them the connections between \cite{hacon_witaszek} and \cite{ahk}.
They would also like to thank the referees for carefully reading the earlier versions.
{SF thanks Fabio Bernasconi for helpful conversations.}
This work was supported by a grant from the Simons Foundation (850684, JW).
	
\section{Preliminaries} \label{sect_prelim}

In this section, we collect some basic facts and definitions that will be needed in this work.

\subsection{Terminology and conventions}\label{sub:notation}

We work over an excellent domain $R$ of finite Krull dimension, which admits a dualizing complex, and all of whose closed points have characteristic zero or $p>5$.
In this work, a variety is an integral scheme, separated and of finite type over $\Spec(R)$.
All our varieties will be quasi-projective over $\Spec(R)$.

For anything not explicitly addressed in this subsection, we direct the reader to the terminology and the conventions of \cite{KM98}.

A \emph{contraction} is a projective morphism $f\colon X \rar Z$ of quasi-projective varieties with $f_* \O X. = \O Z.$.

Consider a set $\mathcal R \subset [0,1]$.
We define the \emph{set of hyperstandard multiplicities} associated with $\mathcal R$ as
\[
\Phi (\mathcal R) \coloneqq \bigg \lbrace \left. 1- \frac{r}{m}  \right| r \in \mathcal R, m \in \nn \bigg \rbrace.
\]
When $\mathcal R = \lbrace 0,1 \rbrace$, we call $\Phi(\mathcal R)$ the set of \emph{standard multiplicities}.
Usually, with no mention, we assume $0,1 \in \mathcal R$, so that $\Phi (\lbrace 0,1 \rbrace ) \subset \Phi (\mathcal R)$.
Furthermore, if $1-r \in \mathcal R$ for every $r \in \mathcal R$, we have that $\mathcal R \subset \Phi (\mathcal R)$.

Let $\mathbb{K}$ denote $\zz$, $\qq$, or $\rr$.
A \emph{$\mathbb{K}$-divisor} on a variety $X$ is a formal sum $D = \sum \subs i=1. ^n a_i P_i$, where $a_i \in \mathbb{K} \setminus \{0\}$, $n \in \nn$ and {the $P_i$ are a set of distinct prime Weil divisors on $X$ indexed by $i=1, \ldots, n$.} 
We say that $D$ is $\mathbb{K}$-Cartier if it can be written as a $\mathbb{K}$-linear combination of $\zz$-divisors that are Cartier.
Given a $\mathbb{K}$-divisor $D=\sum_{i=1}^n a_iP_i$, we define its \textit{support} to be the union of the prime divisors appearing with non-zero coefficient in its decomposition, i.e., $\mathrm{Supp}(D)= \cup_{i=1}^n P_i$.

In all of the above, if $\mathbb{K}= \zz$, we will drop it from the notation.

Given a $\mathbb{K}$-divisor $D$ and a prime divisor $P$ in its support, we denote by $\mu_P (D)$ the coefficient of $P$ in $D$.
Given a divisor $D = \sum \subs i=1. ^n \mu_{P_i}(D) P_i$, we define its {\it round down} as $\lfloor D \rfloor  \coloneqq \sum \subs i=1.^n \lfloor \mu_{P_i}(D) \rfloor P_i$.
Similarly, we define its {\it round up} as $\lceil D \rceil  \coloneqq \sum \subs i=1.^n \lceil \mu_{P_i}(D) \rceil P_i$.
For a $\mathbb{K}$-divisor $D=\sum_{i=1}^n a_i P_i$ and $c \in \rr$, we set $D \sups \geq c. \coloneqq \sum_{a_i \geq c}a_i P_i$, that is, we ignore the components with coefficient strictly less than $c$.
Similarly, we define $D \sups \leq c.$, $D \sups > c.$,  $D \sups < c.$, and $D \sups = c.$.

Given a $\mathbb{K}-$divisor $D = \sum a_i P_i$ on a normal variety $X$, and a morphism $\pi \colon X \to Z$, we define
\[
D^v \coloneqq \sum_{\pi(P_i) \subsetneqq Z} a_i P_i, \
D^h \coloneqq \sum_{\pi(P_i) = Z} a_i P_i.
\]
We call $D^v$ the $\pi$-vertical part of $D$ and $D^h$ the $\pi$-horizontal part of $D$, respectively.

\subsection{B-divisors}
\label{b-div.subs}

Let $\mathbb{K}$ denote $\zz$, $\qq$, or $\rr$.
Given a normal variety $X$, a {\it $\mathbb{K}$-b-divisor} $\mathbf{D}$ is a (possibly infinite) sum of geometric valuations $V_i$ of $k(X)$ with coefficients in $\mathbb{K}$,
\[
 \mathbf{D}= \sum_{i \in I} a_i V_i, \; a_i \in \mathbb{K},
\]
such that for every normal variety $X'$ birational to $X$, only a finite number of the valuations $V_i$ can be realized by divisors on $X'$. 
We define the {\it trace} $\mathbf{D}_{X'}$ of $\mathbf{D}$ on $X'$ as 
\[
\mathbf{D}_{X'} \coloneqq \sum_{
\{i \in I \; | \; c_{X'}(V_i)= D_i, \; 
\codim_{X'} D_i=1\}} a_i D_i,
\]
where $c_{X'}(V_i)$ denotes the center of the valuation on $X'$.

A $\mathbb{K}$-b-divisor $\mathbf{D}$ over $X$ is said {\it $\mathbb{K}$-b-Cartier} if the following holds: there exists a birational model $X'$ of $X$ such that $\mathbf{D}_{X'}$ is $\mathbb K$-Cartier on $X'$ and for any model $r \colon X''  \rar X', \; \mathbf{D}_{X''} = r^\ast \mathbf{D}_{X'}$.
When this is the case, we say that $\mathbf{D}$ descends to $X'$ and write $\mathbf{D}= \overline{\mathbf{D}_{X'}}$.
Given a birational model $\pi \colon X' \rightarrow X$ and a $\mathbb{K}$-Cartier divisor $D'$ on $X'$, we induce a $\mathbb{K}$-b-Cartier divisor $\overline{D'}$ on $X$ as follows:
for any other model $X''$ of $X$, we take a model $\tilde X$ resolving $X'' \dashrightarrow X'$, and we define $\overline{D'}_{X''}\coloneqq p_*q^*D'$, where we have $p \colon \tilde X \rightarrow X''$ and $q \colon \tilde X \rightarrow X'$.
Notice that, in this case, we have $\overline{D'}_{X'}=D'$ and $\overline{D'}_X=\pi_*D'$.
The $\mathbb{K}$-b-divisor $\overline{D'}$ is called {\it $\mathbb{K}$-b-Cartier closure} of $D'$.
We say that a $\mathbb{K}$-b-divisor $\mathbf{D}$ is {\it b-nef}, if it is $\mathbb{K}$-b-Cartier and there exists a model $X'$ of $X$ such that $\mathbf{D}= \overline{\mathbf{D}_{X'}}$, where $\mathbf{D}_{X'}$ is nef on $X'$. 
The notion of b-nef b-divisor can be extended analogously to the relative case.

In all of the above, if $\mathbb{K}= \zz$, we will drop it from the notation.

\subsection{Generalized pairs and singularities}
\label{sect.gen.pairs.sings}

We recall the definition of generalized pairs, first introduced in~\cite{BZ16}, which is a generalization of the classic setting of log pairs.

\begin{definition}
A {\em generalized sub-pair} $(X,B, \mathbf{M})/Z$ over $Z$ is the datum of:
\begin{itemize}
\item a normal variety $X$ together with a projective morphism $X \rar Z$;
\item an $\mathbb R$-Weil divisor $B$ on $X$; and
\item an $\mathbb{R}$-$b$-Cartier $\mathbb R$-b-divisor $\mathbf{M}$ over $X$ that is b-nef over $Z$.
\end{itemize}
Moreover, we require that $K_X +B+ \mathbf{M}_X$ is $\mathbb R$-Cartier.
If $B$ is effective, we say that $(X,B,\bM.)/Z$ is a generalized pair.
If $\bM . =0$, we drop the word ``generalized''.
\end{definition}

When the setup is clear, we will denote the generalized sub-pair $(X, B, \mathbf{M})/Z$ by $(X,B+\mathbf{M}_X)/Z$ and we will say that $(X,B+\mathbf{M}_X)$ is a generalized pair over $Z$ with datum $\mathbf M$; for simplicity, we will often replace $\mathbf{M}_X$ by $M$, and write $(X, B+M)$.
When $Z$ is the spectrum of the ground field, we will simply write $(X, B, \mathbf{M})$ and $(X, B +M)$.

Let $(X,B, \mathbf{M})/Z$ be a generalized sub-pair and let $\rho\colon X' \rar X$ be a projective birational morphism. 
Then, we may write
\[
\K X'.+B_{X'} + \mathbf{M}_{X'}=\rho^\ast (K_X+B+M),
\]
where we choose $K_{X'}$ to satisfy $\rho_*(K_{X'})=K_X$.
Given a prime divisor $E$ on $X'$, we define the {\em generalized log discrepancy} of $E$ with respect to $(X,B,\bM.)/Z$ to be $a_E(X,B,\bM.)\coloneqq 1-\mu_{E}(B_{X'})$.

\begin{definition}
Let $(X,B, \mathbf{M})/Z$ be a generalized sub-pair.
We say that $(X,B,\bM.)/Z$ is \emph{generalized sub-log canonical} if $a_E(X,B,\bM.) \geq 0$ for all divisors $E$ over $X$.
Similarly, if $a_E(X,B,\bM.) > 0$ for all divisors $E$ over $X$ and $\lfloor B \rfloor \leq 0$, we say that $(X,B,\bM.)/Z$ is \emph{generalized sub-klt}.
When $B \geq 0$, we say that $(X,B,\bM.)/Z$ is \emph{generalized log canonical} or \emph{generalized klt}, respectively.
\end{definition}

\begin{definition}
\label{lc.center.def}
Let $(X,B,\bM.)/Z$ be a generalized sub-pair and let $E$ be a divisor over $X$.
We say that $E$ is a {\em non-klt place} for the generalized pair if $a_E(X,B,\bM.) \leq 0$; in this case, we say that $c_X(E)\subset X$ is a {\em non-klt center} for the generalized pair. 
The \emph{non-klt locus} $\Nklt(X,B, \mathbf M)$ is defined as the union of all the non-klt centers of $(X,B,\bM.)/Z$. 
If $a_E(X,B,\bM.) = 0$, we say that $E$ is a {\em generalized log canonical place} for $(X,B,\bM.)/Z$, and, under the assumption that $(X,B,\bM.)/Z$ is generalized sub-log canonical in a neighborhood of $c_X(E)$, $c_X(E)$ is a {\em generalized log canonical center} for $(X,B,\bM.)/Z$.
Similarly, we define the \emph{non-log canonical locus} $\mathrm{Nlc}(X,B,\bM.)$ as the union of all the generalized non-klt centers of $(X,B,\bM.)/Z$ that are not generalized log canonical centers.
\end{definition}

The classical results on adjunction for pairs, cf.~\cite{birkar_p}*{\S4}, can be extended to the context of generalized pairs.
Furthermore, a slightly weaker version of log resolutions is known to exist in our setup by \cites{BMP+,kollar_witaszek}.  The version we will need is the following.

\begin{proposition}[{\cite{BMP+}*{Theorem 2.12},\cite{kollar_witaszek}*{Theorem 1}}]\label{prop:resolution}
	Let $X$ be a reduced scheme of dimension $3$, quasi-projective over a quasi-excellent affine scheme $\Spec(R)$.  Let $T$ be a subscheme of $X$.
	Then there exists a projective birational morphism $g \colon Y\to X$ from a regular scheme $Y$ such that both $g^{-1}(T)$, and $\mathrm{Ex}(g)$ are divisors, $\Supp(g^{-1}(T)\cup\mathrm{Ex}(g))$ is simple normal crossing and $Y$ supports a $g$-ample $g$-exceptional divisor.
\end{proposition}

We will refer to such a $g$ as a log resolution of $(X,T)$, but note that it does not require $g$ to be an isomorphism over the regular locus of $X$.
Given a pair $(X,B)$, by definition, a log resolution of $(X,B)$ is a log resolution of $(X,\Supp(B))$.

Let $(X,B,\mathbf{M})/Z$ be a generalized pair.
Let $S$ be an irreducible component of $\lfloor B \rfloor$, and let $S^\nu$ be its normalization.
Let $f \colon X' \rar X$ be a log resolution of $(X,B)$ where $\mathbf{M}$ descends.
Let $S'$ be the strict transform of $S$ on $X'$, and
denote by $g  \colon  S' \rar S^\nu$ the induced morphism.
Then, we can write
\[
\K X'. + B' + \mathbf{M} \subs X'. = f^\ast  (\K X. + B + \mathbf{M}_X),
\]
where we assume $f_*(\K X'.)=K_X$.
Up to replacing $\mathbf{M}_{X'}$ in its $\mathbb{K}$-linear equivalence class, we may assume that $S'$ does not appear in the support of $\mathbf{M} \subs X'.$.
Then, we define
\[
\K S'. + B_{S'} + \mathbf{N}_{S'} \coloneqq (\K X'. + B' + \mathbf{M} \subs X'.)|_{S'},
\]
where we have $B_{S'} \coloneqq (B'-S')|_{S'}$, and $\mathbf{N} \subs S'. \coloneqq \mathbf{M} \subs X'.|_{S'}$. 
Then, we set $B_{S^\nu} \coloneqq g_* B_{S'}$, and $\mathbf{N}_{S^\nu} \coloneqq g_* \mathbf{N}_{S'}$.
By construction, it follows that
\[
\K S^\nu. + B_{S^\nu} + \mathbf N _{S^\nu} = (\K X. + B + \mathbf{M}_X)|_{S^\nu}.
\]
We refer to this operation as {\it generalized divisorial adjunction}.
By construction, $(S^\nu,B \subs S^\nu.,\mathbf{N})/Z$ is a generalized pair over $Z$.
If appropriate, we may write $\mathbf M | \subs S^\nu.$ for $\mathbf N$, in order to highlight that $\mathbf N$ comes from the restriction of $\mathbf M$ to $S$.
Notice that, if $p \bM.$ is b-Cartier for some $p \in \nn$, then so is $p \mathbf N$.

\begin{remark} \label{rmk_boundary_adj}
If $(X,B,\bM.)/Z$ is generalized log canonical, the divisor $B \subs S^\nu.$ defined by divisorial adjunction is a boundary, and $(S^\nu,B_{S^\nu},\mathbf N)/Z$ is generalized log canonical.
Indeed, by the construction involving the log resolution, it is immediate that every generalized log discrepancy of $(S^\nu,B_{S^\nu},\mathbf N)/Z$ is non-negative.
Thus, one just needs to argue that $B_{S^\nu}$ is effective.
This follows by arguing as in \cite{BZ16}*{Remark 4.8}.
\end{remark}

\begin{lemma} \label{lemma_coeff_dcc}
Let $p \in \nn$ and $\mathcal{R} \subset [0,1]$ be a finite set of rational numbers.
Then, there exists a finite set of rational numbers $\mathcal{S} \subset [0,1]$, only depending on $p$ and $\mathcal{R}$, satisfying the following.
Assume that
\begin{itemize}
\item $(X,B,\bM.)/Z$ is a generalized log canonical pair of dimension at most 3;
\item $S^\nu$ is the normalization of a component of $\lfloor B \rfloor$;
\item $\mathrm{coeff}(B) \subset \Phi(\mathcal R )$ and $p \bM.$ is b-Cartier; and
\item $(S^\nu,B \subs S^\nu.,\mathbf N)/Z$ is the generalized pair obtained by generalized divisorial adjunction.
\end{itemize}
Then, $\mathrm{coeff}(B \subs S^\nu.) \subset \Phi(\mathcal{S})$.
\end{lemma}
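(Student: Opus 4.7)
The strategy is to apply the generalized divisorial adjunction formula and combine it with the log canonicity constraint to extract the finite set $\mathcal{S}$. Fix a prime divisor $D$ on $S^\nu$. Let $f \colon X' \to X$ be a log resolution of $(X,\Supp B)$ along which $\mathbf{M}$ descends (available in dimension at most three), let $S'$ be the strict transform of $S$, and let $g \colon S' \to S^\nu$ be the induced morphism. Choosing a prime $D'$ on $S'$ that maps birationally onto $D$, one has $\mu_D(B_{S^\nu}) = \mu_{D'}(B_{S'})$, reducing the question to a local computation near $D'$ on $S'$.

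The main input is the Shokurov--Koll\'ar adjunction formula, extended to generalized pairs as in \cite{BZ16}*{Proposition 4.9} (and as in \cite{kollar_singularities}*{Chapter 3} for the classical case). It expresses the coefficient as
\[
\mu_D(B_{S^\nu}) = \frac{m - 1 + \sum_i r_i b_i + n/p}{m},
\]
where $m \in \nn$ is the local Cartier index of an appropriate divisor on $X$ at the generic point of the center of $D$, the $r_i \in \nn$ are local ramification integers, the $b_i \in \Phi(\mathcal{R})$ are the coefficients of the components of $B - S$ passing through that center, and $n \in \zz_{\geq 0}$ records the contribution of the moduli part. The denominator $p$ in the last summand comes from the hypothesis that $p\mathbf{M}$ is b-Cartier, together with the assumption that $S'$ is not in the support of $\mathbf{M}_{X'}$.

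By Remark \ref{rmk_boundary_adj}, the generalized pair $(S^\nu, B_{S^\nu}, \mathbf{N})/Z$ is again log canonical, so $\mu_D(B_{S^\nu}) \leq 1$; substituting the formula above gives $\sum_i r_i b_i + n/p \leq 1$. Writing each $b_i = 1 - c_i/k_i$ with $c_i \in \mathcal{R}$ and $k_i \in \nn$ and using the finiteness of $\mathcal{R}$ and $p$, this inequality restricts the tuple $(r_i, c_i, n)$ to a finite combinatorial set, while the denominators $m$ and $k_i$ remain free and combine into a single denominator in the $\Phi$-representation. Rewriting the resulting quotient in the form $1 - s/N$ with $N \in \nn$, one verifies that $s$ ranges over a finite set $\mathcal{S} \subset [0,1] \cap \qq$ depending only on $\mathcal{R}$ and $p$, yielding $\mu_D(B_{S^\nu}) \in \Phi(\mathcal{S})$ as required.

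The principal obstacle is the bookkeeping needed in the last step, namely packaging the local Cartier index, the ramification integers, and the moduli denominator into the single denominator $N$ of the hyperstandard form; everything else is a direct adaptation of Shokurov's original argument. No new input is needed in positive characteristic, since the whole calculation is local at generic points of divisors and a log resolution along which $\mathbf{M}$ descends is available under our dimensional hypothesis.
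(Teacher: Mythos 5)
Your strategy is the same one the paper follows: reduce to the coefficient of $B_{S^\nu}$ at a codimension-one point, express it as $1-\tfrac{1}{m}+\sum_i\tfrac{r_i b_i}{m}+\tfrac{n}{pm}$ using the local Cartier index and the hypothesis that $p\mathbf M$ is b-Cartier, and then exploit generalized log canonicity of the adjoint pair. The only structural difference is in the references: the paper obtains this formula by localizing to an excellent surface germ via \cite{birkar_p}*{Proposition 4.2} together with \cite{kollar_singularities}*{3.35.(1)} (which is what makes the computation legitimate in characteristic $p$, rather than the characteristic-zero statement \cite{BZ16}*{Proposition 4.9} you invoke), and it then delegates the remaining combinatorics to the proof of \cite{birkar_complements}*{Lemma 3.3} ``with no changes''.

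The substantive problem is precisely in that last combinatorial step, which you dismiss as bookkeeping. The inequality $\sum_i r_i b_i + n/p\le 1$ does bound $n$, the number of components, and the $r_i$ (since the nonzero elements of $\Phi(\mathcal R)$ are bounded below by a positive constant depending only on the finite set $\mathcal R$), but your assertion that ``the denominators $m$ and $k_i$ remain free and combine into a single denominator'' is false as stated. Writing $1-\mu_D(B_{S^\nu})=\frac{1-\sum_i r_i+\sum_i r_i c_i/k_i-n/p}{m}$, if two or more terms with $c_i>0$ occur and the $k_i$ were genuinely free, an expression such as $\frac{c_1/k_1+c_2/k_2-1}{m}$ is not of the form $\frac{s}{N}$ with $s$ in a finite set. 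What actually happens is that the same lc inequality must be used a second time to force all but at most one of the $k_i$ (and, when $n>0$, all of them) into a finite range; only in the degenerate configurations where the remaining terms cancel exactly (e.g.\ a single component with $r_i=c_i=1$ and $n=0$) does a free denominator survive, and only then does it merge with $m$ into $N$ while $s$ is a fixed element of $\mathcal R$. This case analysis is exactly the content of \cite{birkar_complements}*{Lemma 3.3}, so you should either carry it out explicitly or cite it as the paper does; as written, your final step asserts the conclusion without the argument that makes it true.
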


\begin{proof}
We follow the proofs of \cite{birkar_p}*{Proposition 4.2} and \cite{birkar_complements}*{Lemma 3.3}.
As in the proof of \cite{birkar_p}*{Proposition 4.2}, by localization, we reduce to the case when $X$ is a normal excellent scheme of dimension 2, and we work around a fixed closed point $V$.
Thus, $(X,B)$ is a log canonical pair.
Furthermore, as in the proof of \cite{birkar_p}*{Proposition 4.2}, we may assume that $S$ is regular and that $S^\nu \rar S$ is an isomorphism.

Let $\tilde B \subs S.$ be the boundary divisor obtained by $(\K X. + B)|_S= \K S. + \tilde B _S$.
By \cite{kollar_singularities}*{3.35.(1)} and \cite{birkar_p}*{Proposition 4.2}, there is a positive integer $l \in \nn$ such that the Cartier index of every integral Weil divisor near $V$ is divides $l$ and
\begin{equation} \label{eq_coeff}
\mu_V(\tilde B _S)= 1- \frac{1}{l} + \sum \subs i. \frac{b_i \alpha_i}{l},
\end{equation}
where $\alpha_i \in \nn$ and $B= \sum_i b_i P_i$ and each $P_i$ is a prime divisor.
Now, \eqref{eq_coeff} is the input to apply the proof \cite{birkar_complements}*{Lemma 3.3} with no changes.
This concludes the proof.
\end{proof}

\subsection{Generalized dlt pairs and generalized MMP}
\label{gen.dlt.mod.ssect}
In this section, we recall the notion of dlt and plt singularities, and we extend it to the context of generalized pairs.
Furthermore, we prove some technical statements regarding the minimal model program (MMP) for these objects.

\begin{definition}
\label{gen.dlt.def}
A generalized pair $(X,B, \mathbf{M})/Z$ is called {\em generalized dlt}, if it is generalized log canonical and for the generic point $\eta$ of any generalized log canonical center the following conditions hold:
\begin{itemize}
    \item[(i)] $\bM . = \overline{\bM X.}$ over a neighborhood of $\eta$; and
    \item[(ii)] $(X,B)$ is log smooth in a neighborhood of $\eta$.
\end{itemize}
If, in addition, every connected component of $\lfloor B \rfloor$ is irreducible, we say that $(X,B,\bM.)/Z$ is {\em generalized plt}.
\end{definition}

\begin{remark} \label{rmk.gen.dlt}
By Definition \ref{gen.dlt.def}, it follows that there is an open subset $U \subset X$ containing all the generic points of the generalized log canonical centers of $(X,B,\bM.)/Z$ such that $U$ is smooth and the generalized log canonical places of $(X,B,\bM.)/Z$ coincide with the log canonical places of $(U,B|_U)$.
In particular, if $(X,B,\bM.)/Z$ is generalized plt, all the divisors $E$ that are exceptional over $X$ satisfy $a_E(X,B,\bM.)>0$.
\end{remark}

\begin{remark} \label{dlt_adj}
{
Let $(X,B, \mathbf{M})/Z$ be a generalized dlt pair of dimension at most 3, and let $Z$ be a stratum of $\lfloor B \rfloor$.
Then, we can iterate divisorial adjunction on the higher dimensional strata containing $Z$ to induce a generalized pair structure on $Z$.
It is easy to see that this process does not depend on the order we consider the strata containing $Z$.
Indeed, by induction on the number of strata and the dimension, it suffices to consider the case of two prime components $T,S \subset \lfloor B \rfloor$ with $Z = S \cap T$.
Then, by the generalized dlt property and the assumptions on the dimension, we can find a log resolution $\pi \colon X' \rar X$ of $(X,B)$ where $\bM.$ descends and such that $\pi$ is an isomorphism at the generic point of $S \cap T$.
Then, the strict transforms $S'$ and $T'$ of $S$ and $T$ intersect along the strict transform $Z'$ of $Z$, and by direct computation, we can see that doing adjunction first on $S'$ and then on $S' \cap T'$ or first on $T'$ and then on $S' \cap T'$ lead to the same outcome on $Z'$.}
\end{remark}

\begin{definition}
We say generalized pairs $(X_1,B_1, \bM 1.)$ and $(X_2,B_2, \bM 2.)$ are {\it crepant} if $X_1$ and $X_2$ are birational, $\bM 1. = \bM 2. \eqqcolon \bM.$, and there is a common resolution $\phi_i \colon X \to X_i$ such that
\[
\phi_1^*(K_{X_1}+B_1+\bM X_1.)=\phi^*_2(K_{X_2}+B_2+ \bM X_2.).
\]
\end{definition}

\begin{lemma}\label{lem:plt_crepant}
Let $(X_1,B_1,\bM.)/Z$ and $(X_2,B_2,\bM.)/Z$ be two crepant $\bQ$-factorial generalized pairs, such that there is a birational map $\phi \colon X_1\dashrightarrow X_2$ that does not contract any component of $\rddown{B_1}$.
Then, if $(X_1,B_1, \bM.)$ is generalized plt, so is $(X_2,B_2, \bM.)$.
\end{lemma}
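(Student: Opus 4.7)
The approach is to use crepancy to transport the generalized plt condition from $(X_1,B_1,\bM.)$ to $(X_2,B_2,\bM.)$. Generalized plt is equivalent to generalized log canonical together with pairwise disjointness of distinct components of $\lfloor B \rfloor$. Since crepancy preserves the generalized log discrepancy of every divisorial valuation on $k(X_1)=k(X_2)$ (the b-divisor $\bM.$ agrees on both sides by definition), the pair $(X_2,B_2,\bM.)$ is automatically generalized log canonical, and it suffices to verify pairwise disjointness of components of $\lfloor B_2\rfloor$.

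As a preliminary step I would set up a bijection between the components of $\lfloor B_1\rfloor$ and $\lfloor B_2\rfloor$ induced by $\phi$. For any prime divisor $T\subset X_2$ with $\mu_T(B_2)=1$, the valuation $v_T$ is a generalized log canonical place of $(X_2,B_2,\bM.)$, hence by crepancy also of $(X_1,B_1,\bM.)$. Remark \ref{rmk.gen.dlt} then forces $v_T$ to be non-exceptional over $X_1$, so $v_T=v_S$ for a prime divisor $S\subset\lfloor B_1\rfloor$. Conversely, each component $S\subset\lfloor B_1\rfloor$ is not contracted by $\phi$ by hypothesis, so $T\coloneqq\phi_*S$ is a prime divisor on $X_2$ with $\mu_T(B_2)=\mu_S(B_1)=1$ by crepancy.

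Next, suppose for contradiction that two distinct components $T_1,T_2\subset\lfloor B_2\rfloor$ meet. By a standard argument for generalized log canonical pairs, there exists a generalized log canonical place $E$ of $(X_2,B_2,\bM.)$ with $c_{X_2}(E)\subseteq T_1\cap T_2$; in particular $c_{X_2}(E)$ has codimension at least $2$ in $X_2$, so $E$ is exceptional over $X_2$. By crepancy, $E$ is also a generalized log canonical place of $(X_1,B_1,\bM.)$; by Remark \ref{rmk.gen.dlt}, it must be non-exceptional over $X_1$, so $S\coloneqq c_{X_1}(E)$ is a prime divisor contained in $\lfloor B_1\rfloor$. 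Since $\phi$ does not contract $S$, we have $c_{X_2}(E)=\phi_*S$, a prime divisor on $X_2$, contradicting codimension at least $2$.

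The main technical obstacle is producing the exceptional generalized log canonical place $E$ centered inside $T_1\cap T_2$ in the third step. I would take a log resolution $\pi\colon Z\to X_2$ of $(X_2,B_2)$ on which $\bM.$ descends, so that the strict transforms $\tilde T_1,\tilde T_2$ are snc components of $\lfloor B_Z\rfloor$, and analyze the preimage $\pi^{-1}(T_1\cap T_2)$, which is necessarily covered by intersections of components of $\Supp(\pi^*(T_1+T_2))$. The desired $E$ is then produced by blowing up a suitable codimension-$2$ stratum, computing the generalized log discrepancy explicitly using that $\bM.$ descends to $Z$, and invoking the generalized log canonical hypothesis $a_E\geq 0$ to force $a_E=0$.
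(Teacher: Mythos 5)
Your opening reduction is where the argument breaks: you assert that generalized plt is equivalent to generalized log canonical together with pairwise disjointness of the distinct components of $\lfloor B\rfloor$. That equivalence is false, both for the paper's Definition \ref{gen.dlt.def} (generalized plt $=$ generalized dlt $+$ every connected component of $\lfloor B\rfloor$ irreducible) and for the usual formulation ``$a_E>0$ for every $E$ exceptional over $X$'': for instance, a smooth surface with $B$ an irreducible nodal curve and $\mathbf{M}=0$ is log canonical with $\lfloor B\rfloor$ irreducible, yet the blow-up of the node is an exceptional lc place, so the pair is neither plt nor dlt. Consequently, proving that $(X_2,B_2,\mathbf{M})$ is generalized lc and that the components of $\lfloor B_2\rfloor$ are pairwise disjoint does not yield generalized plt: as written, your argument never excludes an exceptional-over-$X_2$ generalized lc place whose center lies inside a \emph{single} component of $\lfloor B_2\rfloor$, and it never verifies conditions (i)--(ii) of Definition \ref{gen.dlt.def} at the generic points of the generalized lc centers. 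The repair is contained in your own third step: apply the crepancy $+$ Remark \ref{rmk.gen.dlt} $+$ non-contraction argument to an \emph{arbitrary} generalized lc place $E$ of $(X_2,B_2,\mathbf{M})$ that is exceptional over $X_2$ (its center need not lie in $T_1\cap T_2$); this shows every generalized lc place of $(X_2,B_2,\mathbf{M})$ is a component of $\lfloor B_2\rfloor$, after which conditions (i)--(ii) are automatic at the resulting codimension-one centers and only disjointness remains.

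The second gap is the step you flag as the main technical obstacle: producing an exceptional generalized lc place centered in $T_1\cap T_2$ when two components meet. The sketch does not do this. Blowing up a codimension-two stratum of a log resolution only forces $a_E\le 0$ if that stratum lies on divisors whose coefficients in $B_Z$ are large enough (e.g.\ two coefficient-one components), and the existence of such a stratum over $T_1\cap T_2$ is exactly what has to be proved: the strict transforms of $T_1$ and $T_2$ need not meet on $Z$, and the exceptional divisors over $T_1\cap T_2$ could a priori all have coefficients below $1$. One genuinely needs a connectedness-type input here, e.g.\ localize at the codimension-two generic point of $T_1\cap T_2$ and apply connectedness of the non-klt locus for the birational morphism from the log resolution of the resulting surface germ (cf.\ Lemma \ref{lem:ample_to_big_nef}), together with the observation that $a_E(X_2,B_2,\mathbf{M})\le a_E(X_2,B_2)$ because $\mathbf{M}$ is b-nef over $Z$ and the negativity lemma applies --- or simply the known statement for pairs. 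For comparison, the paper's proof is a one-liner: by Remark \ref{rmk.gen.dlt} the generalized statement reduces to the classical crepant statement for plt pairs, \cite{kollar_singularities}*{Corollary 4.35}; quoting that result would close both gaps at once.
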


\begin{proof}
By Remark \ref{rmk.gen.dlt}, this follows immediately from the usual version for pairs \cite{kollar_singularities}*{Corollary 4.35}.
\end{proof}

\begin{lemma}[cf. {\cite{BMP+}*{Lemma 2.34}}]\label{lemma add ample}
Let $(X,B)$ be a $\qq$-factorial klt pair with $\dim X \leq 3$ over $\Spec(R)$. 
Let $f \colon X\to Z$ be a projective contraction, and let $A$ be a relatively ample $\mathbb R$-divisor.
Then around any $z\in Z$, there is some open subset $z\in U\subset Z$ such that we may find $0\leq A'\sim_{\rr,U} A|_{X_U}$ such that $(X_U,B_U+A')$ is klt.
\end{lemma}

\begin{proof}
First we prove the corresponding statement over the local ring $\mathcal{O}_{Z,z}$, by reducing to the case of $\mathbb{Q}$-divisors $A$ and $B$, which was dealt with in \cite{BMP+}*{Lemma 2.34}.
Let $Y=X\times_Z\Spec(\mathcal{O}_{Z,z})$.
Let $E$ be an effective $\rr$-divisor such that $A_Y- E_Y$ is an $f_Y$-ample $\mathbb{Q}$-divisor, where $E_Y$ is sufficiently small that $(Y,B_Y+ E_Y)$ is klt.
Now let $\tilde{B}_Y\geq B_Y+\epsilon E_Y$ be a $\mathbb{Q}$-divisor such that $(Y,\tilde{B}_Y)$ is klt.
Then, apply \cite{BMP+}*{Lemma 2.34} to obtain $A'_Y\sim_{\mathbb{Q}}A_Y- E_Y$ such that $(Y,\tilde{B}_Y+A'_Y)$ is klt.
Finally, $A'_Y+E_Y$ solves the original problem for $Y\to \mathcal{O}_{Z,z}$. 
But now there is some open subset $U$ containing $z\in Z$ such that $A'_Y$ spreads out to a divisor $A'$, satisfying $0\leq A'\sim_{\rr, U} A|_{X_U}$ and $(X_U,B_U+A')$ is klt.
\end{proof}

\begin{lemma}\label{local_termination}
Let $X$ be a $\mathbb{Q}$-factorial variety over $\Spec(R)$, with $\dim(X)\leq 3$.
Let $f \colon X\to Z$ be a projective contraction such that the image of $f$ is positive dimensional, and let $A$ be an $f$-ample $\mathbb{R}$-divisor.
Suppose that $D$ is an $\mathbb{R}$-Cartier divisor on $X$ such that $D+A$ is $f$-nef and there is a finite open cover $\{U_j\}_{j=1}^n$ of $Z$ such that for each $j$ there is boundary $B_{U_j}$ on $X_{U_j}$ such that $(X_{U_j}, B_{X_{U_j}})$ is klt and $D_{X_{U_j}}\sim_{\mathbb{R}, U_j}K_{X_{U_j}}+B_{X_{U_j}}$.
	
Then we can run a $D$-MMP over $Z$ with scaling of $A$ and it terminates.
\end{lemma}

\begin{proof}
We first show that the cone theorem holds for $D$, following the proof in \cite{BMP+}*{Theorem 9.28}.
The first step of the proof of the cone theorem \cite{BMP+}*{Theorem 9.28} is to find a curve in any fixed $D$-negative extremal ray $\Gamma$ of $\overline{NE}(X/Z)$ satisfying the intersection bound with $D$.
Note that for any open subset $U\subset Z$ we have a linear map $\overline{NE}(X_U/U)\xrightarrow{\iota_*} \overline{NE}(X/Z)$ given by push-forward of cycles, where $\iota$ is the inclusion $\iota:X_U\to X$.
We claim that this is well defined and injective.
Note first that for every Cartier divisor $N$ on $U$, some multiple of $\overline{N}\subset X$ is Cartier, and for any $1$-cycle $C$ on $X_U$ which is vertical over $U$, $C\cdot N=\iota_*C\cdot \overline{N}$ since $\iota$ is an isomorphism near the support of $C$.
Conversely, for any Cartier divisor $M$ on $X$, $\iota^*M$ is a well defined Cartier divisor on $X_U$ and $C\cdot \iota^*M=\iota_*C\cdot M$ since again $\iota$ is an isomorphism near the support of $C$.
In other words, the projection formula holds for the non-proper $\iota \colon X_U\to X$ so long as we only look at $1$-cycles over $Z$. 
Now 
\[[\iota_*C]=0 \iff \iota_*C\cdot N=0\ \forall N\subset X\iff C\cdot M=0\ \forall M\subset X_U\iff [C]=0\]
verifies the injectivity and well definedness of $\iota_*$ on $\overline{NE}(X_U/U)$.

We now claim that the extremal ray $\Gamma$ of $\overline{NE}(X/Z)$ is the image of an extremal ray $\Gamma_j$ of $\overline{NE}(X_{U_j}/U_j)$ for some $j=1,\ldots,n$.
For any non-zero class $\alpha\in \Gamma$ we must have $k=\alpha\cdot A>0$ for our fixed ample Cartier divisor $A$, and if we let $\{C_i\}_{i \in \mathbb{N}}$ be effective 1-cycles such that $[C_i]\to \alpha$ as $i \to \infty$, then by truncating the sequence we may assume that $3k/2>C_i\cdot A>k/2$ for all $i$.
Note that since the effective $1$-cycle $C_i$ is vertical over $Z$, there is a well defined $1$-cycle $C_i\cap X_{U_j}$ on $X_{U_j}$ over $U$ (however this does not descend to a well defined map modulo numerical equivalence).
Furthermore, for each $i$ we must have
$$n\left(C_i\cdot A\right)\geq \sum_{j=1}^n(C_i\cap X_{U_j})\cdot A_{X_{U_j}}\geq C_i\cdot A
$$
by the projection formula, since every irreducible component of $C_i$ is contained in at least one $X_{U_j}$ and hence is the push-forward of a cycle on $X_{U_j}$, which gives the right hand inequality.
However each component of $C_i$ is counted at most $n$ times in the central term and hence we obtain the left hand inequality.
Therefore we must have that for each $i$, there is $j$ such that  $$n3k/2\geq (C_i\cap X_{U_j})\cdot A|_{X_{U_j}}>k/2n,$$ where we note that the left hand inequality does not depend on the choice of $j$.
Since the cover is finite, by taking a subsequence we may assume that both inequalities hold for  some fixed $j$ and all $i$.
For this choice of $j$, the sequence $[C_i\cap X_{U_j}]\subset \overline{NE}(X_{U_j}/U_j)$ has a convergent subsequence, since every class is contained in
$$\{\beta\in \overline{NE}(X_{U_j}/U_j)\mid \beta\cdot A\leq n3k/2\},$$
which  is a compact subset.
Replace the sequence by this subsequence and let $\alpha_U=\lim_i[C_i\cap X_{{U_j}}]$.
Notice that $\alpha_U \neq 0$, since $(C_i \cap X_{U_j}) \cdot A|_{X_{U_j}} > k/2n$ for all $i$.

We claim that $\alpha_U$ generates an extremal ray $\Gamma_U$ in $\overline{NE}(X_{U_j}/U_j)$ such that $\iota_*\Gamma_U=\Gamma$.
Firstly it is clear that for each $i$, we can express $C_i$ as a sum of effective cycles as:
$$
C_i=\iota_*(C_i\cap X_{U_j})+(C_i-\iota_*(C_i\cap X_{U_j})).
$$
Therefore, we have
$$
\alpha=\lim_i [C_i]=\lim_{i}[\iota_*(C_i\cap X_{U_j})]+\lim_{i}[C_i-\iota_*(C_i\cap X_{U_j})].
$$
We have already shown that $\lim_{i}[\iota_*(C_i\cap X_{U_j})]$ exists as a non-zero pseudo-effective cycle, and so the second limit also exists and hence we can split $\lim_i [C_i]$ as a sum of limits.
Since $C_i-\iota_*(C_i\cap X_{U_j})$ is effective for every $i$, it follows that $\lim_{i}[C_i-\iota_*(C_i\cap X_{U_j})]$ is a pseudo-effective cycle, possibly 0.
Now since $\alpha$ is in the extremal ray $\Gamma$ and $\lim_{i}[\iota_*(C_i\cap X_{U_j})] \neq 0$, the definition of extremal ray implies that $\lim_{i} [\iota_*(C_i\cap X_{U_j})]$ is also a generator of $\Gamma$.
It remains to show that $\lim_{i}[C_i\cap X_{U_j}]$ generates an extremal ray in $\overline{NE}(X_{U_j}/U_j)$.  We have already shown that it generates a non-zero ray, and the fact that it is extremal follows from the injectivity of $\overline{NE}(X_U/U)\xrightarrow{\iota_*} \overline{NE}(X/Z)$ and the definition of extremal ray.

Now we can find the required curve over $U_j$ by applying \cite{BMP+}*{Theorem 9.28} to the pair $(X_{U_j}, B_{U_j})$ and the extremal ray $\Gamma_{U}$.
The non-accumulation and countability of extremal rays is a formal consequence of the existence of the bound exactly as in the proof of \cite{BMP+}*{Theorem 9.28}.
	
Now we run the $D$-MMP, for which we must justify that the steps exist.  Given a $D$-negative extremal ray in $\overline{NE}(X/Z)$, it follows formally from the cone theorem that there is a big and nef divisor $A$ on $X$ which vanishes exactly on curves in $\Gamma$. 
To contract the extramal ray, it is enough to show that $A$ is semiample, which can be checked locally on each $U_j$. 
That this holds follows from \cite{BMP+}*{Theorem 9.33}.
Similarly, the existence of flips can be checked locally, and this follows from \cite{BMP+}*{Theorem 9.14}.
Finally, the LMMP terminates since it terminates over each of the finitely many open subsets $U_j$ by \cite{BMP+}*{Theorem 9.37}.
\end{proof}


\begin{remark}
Lemma~\ref{local_termination} was formulated for termination with scaling, but note however that it could equally well have been formulated for any other kind of termination which holds over the open subsets.
\end{remark}

\begin{proposition}\label{prop:non-pseff-termination-2}
Let $(X,B)$ be a $\qq$-factorial dlt pair with $\dim X \leq 3$ over $\Spec(R)$.
Let $f \colon X\to Z$ be a projective contraction such that one of the following holds:
\begin{enumerate}
	\item the image of $X$ in $Z$ has positive dimension; or
	\item the image of $X$ in $Z$ is a point with perfect residue field of characteristic $p>5$.
	\end{enumerate}
	 and let $M$ be an $f$-nef divisor.
Assume that $K_X+B+M$ is not pseudo-effective over $Z$.
Then, there is a $(K_X+B+M)$-MMP over $Z$ which terminates.
\end{proposition}

\begin{proof}
First we deal with case (1).
Let $A$ be an $f$-ample divisor on $X$ such that $K_X+B+M+A$ is $f$-nef.
We claim that we can run a terminating $(K_X+B+M)$-MMP over $Z$ with scaling of $A$.
The assumption that $K_X+B+M$ is not pseudo-effective over $Z$ implies that if $0 < \epsilon \ll 1$, the required MMP is the same as an MMP for $K_X+B+M+\epsilon A$ with scaling of $(1-\epsilon)A$.
Now, fix $0 <\delta \ll \epsilon$ so that $\epsilon A+\delta B \sups =1.$ is $f$-ample.
As $M$ is $f$-nef, then $\epsilon A + \delta B \sups =1. + M$ is $f$-ample.

By Lemma \ref{lemma add ample} and noetherianity of $Z$, we may find a finite open cover $\{U_j\}_{j=1}^n$ of $Z$ such that for each $j$ there is
	$0 \leq A_j \sim_{\rr,Z} \epsilon A_{X_{U_j}} + \delta B_{X_{U_{j}}} \sups =1. + M_{X_{U_j}}$ such that $(X_{U_j},B_{U_j} \sups <1. + (1-\delta)B_{U_j} \sups =1. + A_j)$ is klt.
Thus, the MMP exists and terminates  by Lemma \ref{local_termination}.

In case (2) the MMP exists and terminates by \cite{GNT}*{Theorem 2.13}.
\end{proof}

\begin{proposition}[Special termination]\label{prop:special_termination}
Let $(X,B,\bM.)/Z$ be a $\qq$-factorial generalized dlt pair of dimension $3$ 
over $\Spec(R)$.
Assume that $B$ is a $\qq$-divisor and $\bM.$ $\qq$-b-Cartier.
Then, for any sequence of $(K_X+B+M)$-flips over $Z$, after finitely many flips the flipping locus does not intersect the strict transform of $\Supp(\rddown{B})$.
\end{proposition}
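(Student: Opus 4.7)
The plan is to mimic the classical Shokurov--Fujino special termination argument, reducing the claim to a termination statement for the induced sequence of birational maps on the $2$-dimensional generalized dlt pair obtained by adjunction along a component of $\lfloor B \rfloor$. Since $\lfloor B \rfloor$ has only finitely many irreducible components, I would first fix one such component $S$ and aim to show that after finitely many flips the flipping locus misses the strict transform of $S$. Write $\phi_i \colon X_i \dashrightarrow X_{i+1}$ for the given sequence of flips and let $S_i \subset X_i$ be the strict transform of $S$; since each $\phi_i$ is an isomorphism in codimension $1$ and does not contract divisors, $S_i$ is well-defined and has coefficient $1$ in $B_i$. Generalized divisorial adjunction then yields generalized dlt pairs $(S_i^\nu, B_{S_i^\nu}, \mathbf{N})/Z$ of dimension $2$, and by Lemma \ref{lemma_coeff_dcc} the coefficients of each $B_{S_i^\nu}$ lie in a fixed finite hyperstandard set $\Phi(\mathcal{S})$, giving uniform DCC control along the whole sequence.

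Next I would analyze the restricted diagram. Let $\alpha_i \colon X_i \to T_i$ be the flipping contraction and $\beta_i \colon X_{i+1} \to T_i$ the flip; restricting to $S_i$ and $S_{i+1}$ and Stein factoring produces $S_i^\nu \to U_i \leftarrow S_{i+1}^\nu$. Because the flipping locus has codimension at least $2$ in $X_i$, the induced birational map $\psi_i \colon S_i^\nu \dashrightarrow S_{i+1}^\nu$ is an isomorphism in codimension $1$, and adjunction gives $(K_{X_i} + B_i + \mathbf{M}_{X_i})|_{S_i^\nu} = K_{S_i^\nu} + B_{S_i^\nu} + \mathbf{N}_{S_i^\nu}$. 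The sign conditions on the flipping and flipped curves transfer to the surface, and by the negativity lemma the generalized log discrepancy $a_E$ is weakly monotone in $i$ for every divisor $E$ over $S^\nu$, with a strict gain at some $E$ whenever the flipping locus in $X_i$ actually meets $S_i$.

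The final step is to conclude via termination in dimension $2$: the sequence $\{\psi_i\}$ is a $(K_{S^\nu} + B_{S^\nu} + \mathbf{N}_{S^\nu})$-non-positive birational process on a generalized dlt surface pair, the coefficients of $B_{S_i^\nu}$ lie in a fixed DCC set, and the generalized log discrepancies at any finite collection of valuations take values in a DCC set as well. A Shokurov-type difficulty built from these discrepancies then decreases strictly whenever the flipping locus meets $S_i$ and takes values in a DCC set, so this happens only finitely often. The main obstacle is the preceding paragraph: one must carefully verify that the restriction of a $3$-fold flip to a component $S \subset \lfloor B \rfloor$ really produces a well-behaved step for the induced generalized pair on $S^\nu$ --- controlling the sign of $K_{S^\nu} + B_{S^\nu} + \mathbf{N}_{S^\nu}$ on the restricted curves, preserving generalized dlt-ness across the sequence, and checking compatibility of adjunction with the Stein-factored restricted diagram. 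These technicalities are the heart of the argument in the generalized, positive-characteristic setting; once they are in place, Lemma \ref{lemma_coeff_dcc} and the monotonicity of log discrepancies close the argument.
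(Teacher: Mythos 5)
Your overall strategy (restrict to a component $S$ of $\lfloor B\rfloor$ by generalized adjunction, use Lemma \ref{lemma_coeff_dcc} for DCC control of the surface coefficients, and exploit monotonicity of generalized log discrepancies under flips) is the same skeleton as the paper's proof, which follows Birkar's special termination. However, there are two concrete gaps. First, your claim that $\psi_i \colon S_i^\nu \drar S_{i+1}^\nu$ is an isomorphism in codimension one ``because the flipping locus has codimension at least $2$ in $X_i$'' is false: a flipping (resp.\ flipped) curve \emph{contained in} $S_i$ (resp.\ $S_{i+1}$) is a divisor on that surface, so $\psi_i$ can both contract and extract divisors. This is exactly the difficulty special termination must overcome; the paper handles it in its last paragraph by showing that after finitely many steps $S_i$ contains no component of the flipped locus (so the induced maps become morphisms, and then isomorphisms because the Picard rank would otherwise drop).

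Second, the difficulty/DCC argument as you state it does not close, because the finiteness you need (only finitely many divisors over $S^\nu$ with generalized log discrepancy below $1$) holds only over the generalized klt locus of $(S^\nu, B_{S^\nu}+\mathbf N_{S^\nu})$: over a one-dimensional generalized lc center of $(X,B,\bM.)$ (a component of $S\cap T$, e.g.\ a normal crossing point of the boundary on the surface) there are infinitely many divisors with log discrepancy $0$, so a difficulty is not finite and DCC alone does not force stabilization there. The paper's proof (like the classical argument) therefore proceeds by induction on the dimension of the lc centers: it first shows the MMP is eventually an isomorphism near the $0$- and $1$-dimensional generalized lc centers, via double adjunction to the curves $C=S\cap T$, constancy of the moduli part on the curves, monotonicity, and Lemma \ref{lemma_coeff_dcc}; this is then used both to see that each flipping locus can meet only one component of $\lfloor B\rfloor$ and to guarantee that the centers on $S_1$ of the flipped divisors $D_i$ lie in the generalized klt locus, where finiteness plus coefficient stabilization applies. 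Your outline omits this induction step, and you yourself flag the restricted-diagram verifications as unresolved, so as written the proposal has a genuine gap at precisely the points where the paper's proof does the work.
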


\begin{proof}
We will follow the proof of \cite{birkar_p}*{Proposition 5.5}.
By \cite{birkar_complements}*{2.13.(2)}, the generalized dlt property is preserved under the MMP.
Since no generalized log canonical center can be created by the MMP, up to truncating the MMP, we may assume that no generalized log canonical center is contracted by the MMP.
In particular, the MMP is an isomorphism near the generalized log canonical centers of dimension 0.

Let $C$ be a generalized log canonical center of dimension 1.
As $(X,B+M)$ is generalized dlt, there are two irreducible components $S$ and $T$ of $\lfloor B \rfloor$ such that $C$ is a component of $S \cap T$.
Let $(X_i,B_i+M_i)$ denote the $i$-th step of the MMP, and let $C_i,S_i \subset X_i$ denote the birational transforms of $C$ and $S$, respectively.

Since $\Nklt(X_i,B_i,\mathbf{M})=\Nklt(X_i,B_i)$ where $(X_i,B_i)$ is a $\mathbb{Q}$-factorial dlt pair, the pair $(X,S_i)$ is plt and so $S_i$ is normal by \cite{BMP+}*{Corollary 7.15}.
Hence the pair $(S_i, \mathrm{Diff}_{S_i}(B_i))$ is also dlt and hence by a similar argument, each irreducible component of $S_i\cap T_i$ is normal and so $C_i$ is normal.
By generalized divisorial adjunction, we can write $(\K X_i. + B_i + M_i)| \subs S_i. = \K S_i. + B \subs S_i. + M \subs S_i.$ and $(\K S_i. + B \subs S_i. + M \subs S_i.)| \subs C_i. = \K C_i. + B \subs C_i. + M \subs C_i.$.

By assumption, no generalized log canonical center is contracted, thus we have $C_i \simeq C \subs i+1.$ and we can therefore compare divisors defined on $C_i$ and $C \subs i+1.$, respectively.
By iterated generalized adjunction and the fact that $C$ is a curve, we have $M \subs C_i. = M \subs C_{i+1}.$.
Indeed, let $\tilde X$ be a common resolution of $X_i$ and $X \subs i+1.$ where $\bM.$ descends.
Let $\tilde M$ denote the trace of $\bM.$ on $\tilde X$, and let $\tilde S$ denote the strict transform of $S_i$.
We may assume that $\tilde S$ is smooth.
Let $(\tilde X,\tilde B _i,\bM.)$ and $(\tilde X,\tilde B _{i+1},\bM.)$ denote {the crepant pull-backs} of $(X_i,B_i+M_i)$ and $(X_{i+1},B_{i+1}+M_{i+1})$ on $\tilde X$, respectively.
Then, the generalized pair $(S_i,B \subs S_i. + M \subs S_i.)$ (resp. $(S_{i+1},B \subs S_{i+1}. + M \subs S_{i+1}.)$) is determined by generalized divisorial adjunction of $(\tilde X,\tilde B _i,\bM.)$ (resp. $(\tilde X,\tilde B _{i+1},\bM.)$).
In particular, the moduli part is independent of $j \in \{i,i+1 \}$.
Then, we may apply this argument again in having $S_i$ and $S_{i+1}$ as ambient spaces and performing generalized divisorial adjunction to $C_i$ and $C_{i+1}$.
Again, the moduli part on $C_j$ is independent of $j \in \{i,i+1\}$.
As $C_i$ and $C_{i+1}$ are curves, they do not have any higher model, so it follows that $M \subs C_i. = M \subs C_{i+1}.$ under the identification $C_i = C \subs i+1.$.
Furthermore, as the MMP is $(\K X. + B + M)$-negative, the generalized log discrepancies of $(X_{i+1},B_{i+1}+M_{i+1})$ are greater than or equal to the corresponding generalized log discrepancies of $(X_i,B_i+M_i)$. In particular, by repeated generalized divisorial adjunction, we have $B_{C_i} \geq B_{C_{i+1}}$.
By Lemma \ref{lemma_coeff_dcc}, the coefficients of $B \subs S_i.$ and $B \subs C_i.$ are in a fixed DCC set.
Therefore, we have $B \subs C_i. = B \subs C_{i+1}.$ for $i \gg 0$.
In particular, the MMP is an isomorphism around $C_i$.

By the previous argument, for $i \gg 0$ the MMP is an isomorphism around the generalized log canonical centers of dimension at most 1.
Up to {truncating the MMP}, we may assume this is true for the whole MMP.
Now, let $S$ be an irreducible component of $\lfloor B \rfloor$, {and let $S_i$ denote its strict transform on $X_i$}.
For each step of the MMP, let $X_i \rar W_i$ be the birational contraction leading to the flip $X_i \drar X \subs i+1.$.
{To conclude, we need to show that, for $i \gg 0$, the exceptional locus of $X_i \rar W_i$ is disjoint from every irreducible component of $\lfloor B_i \rfloor$.
First, we will show that, for every $i$, the exceptional locus of $X_i \rar W_i$ cannot intersect two distinct components of $\lfloor B_i \rfloor$.
Then, we will argue that, for every $S$, the exceptional locus of $X_i \rar W_i$ can intersect $S_i$ only finitely many times.}

{Now, let $E_i$ denote the exceptional locus of $X_i \rar W_i$, and assume that two distinct components $S_i$ and $T_i$ of $\lfloor B_i \rfloor$ intersect $E_i$.}
If $S_i \cap T_i \cap E_i \neq \emptyset$, we get a contradiction, as the MMP is an isomorphism {around} $S_i \cap T_i$.
Thus, neither $S_i$ nor $T_i$ contain $E_i$, and both of them intersect $E_i$ at {distinct} isolated points.
{Since neither $S_i$ nor $T_i$ contains any component of $E_i$, both of them intersect the curve $E_i$, and $X_i \rar W_i$ is an extremal contraction, it follows that $S_i$ and $T_i$ are relatively ample over $W_i$.
Thus, by the definition of flip, $S_{i+1}$ and $T_{i+1}$ is relatively anti-ample over $W_i$.
In particular, both $S_{i+1}$ and $T_{i+1}$ contain the exceptional curves for $X_{i+1} \rar W_i$.
Then, it follows that $S_{i+1} \cap T_{i+1}$ contains a curve that was not present in $S_i \cap T_i$.
Hence, a new generalized log canonical center is created, contradicting the negativity of the MMP.}
This leads to the sought contradiction.

{Now, we are left with showing that, for a given component $S$ of $\lfloor B \rfloor$, its strict transform intersects the indeterminacy locus of $X_{i-1} \drar X_i$ only finitely many times, where $i > 1$.
To this end, it is best to analyze the exceptional locus of the flipped contraction $X_i \rar W_{i-1}$, rather than the exceptional locus of the flipping contraction as in the previous paragraph.
Now, assume that $S_i$ intersects the exceptional locus of $X_i \rar W_{i-1}$.
In this scenario, we have two cases: either $D_i \subset S_i$ for some component of the exceptional locus for $X_i \rar W \subs i-1.$, or $S_i$ intersects the exceptional locus in finitely many points.}
{First,} assume $D_i \subset S_i$ is a component of the exceptional locus for $X_i \rar W \subs i-1.$.
As we have $B \subs S_i. \geq 0$, we have $a \subs D_i. (S_i,B \subs S_i. + M \subs S_i.) \leq 1$.
Since $D_i$ is in the flipped locus, it follows that $a \subs D_i. (S_1,B \subs S_1. + M \subs S_1.) < a \subs D_i. (S_i,B \subs S_i. + M \subs S_i.) \leq 1$.
By the reduction in the previous paragraph, we can assume that the generic point of the center of $D_i$ on $S_1$ is in the generalized {} {klt} locus of $(S_1, B \subs S_1. + M \subs S_1.)$.
{Furthermore, $D_i$ corresponds to a generalized non-canonical valuation of $(S_1,B_{S_1}+M_{S_1})$.}
Therefore, as $S_1$ is a surface, $D_i$ belongs to a finite set of divisors over $S_1$.
Furthermore, as the coefficients of $B \subs S_i.$ belong to a fixed DCC set, the coefficients along $D_i$ stabilize after finitely many steps.
Hence, as there are finitely many divisors involved and their coefficients all need to stabilize, $S_i$ can not contain any component of the exceptional locus of $X_i \rar W \subs i-1.$ for $i \gg 0$.
In particular, the birational maps $S_i \drar S \subs i+1.$ are morphisms for $i \gg 0$.
{Up to truncating the MMP, we may assume that $S_i \rar S_{i+1}$ is a morphism for all $i$.
To conclude, we need to address the second scenario, i.e., when $S_i$ intersects the exceptional locus of $X_i \rar W \subs i-1.$ in finitely many points.
In this case, $S_i$ is relatively ample for $X_i \rar W_{i-1}$, and thus $S_{i-1}$ is relatively anti-ample for $X \subs i-1. \rar W \subs i-1.$ and the exceptional locus of $X \subs i-1. \rar W \subs i-1.$ is contained in $S_{i-1}$.
In particular, the morphism $S \subs i-1. \rar S_i$ contracts some curves.
}
Since each $S_i$ is normal and the Picard rank drops if $S_{i-1} \rar S_i$ contracts a curve, these morphisms need to be isomorphisms for $i \gg 0$.
This shows that $X_{i-1} \drar X \subs i.$ is an isomorphism along $S_i$, concluding the proof.
\end{proof}

\subsection{$\pr 1.$-links}
In this section, we recall the notions of standard $\pr 1.$-link and $\pr 1.$-linkage, and we extend them to the context of generalized pairs.
Furthermore, we introduce a weaker version of these notions, which is sufficient for applications.

\begin{definition}
\label{def p1 link}
A generalized pair $(X,D_1+D_2+\Delta,\mathbf{M})/Z$ with a surjective proper morphism $g \colon X \rar T$ over $Z$ is called a {\em {weak} $\pr 1.$-link} if
\begin{enumerate}
	\item[(0)] $D_1$ and $D_2$ are distinct reduced prime divisors and $\lfloor \Delta \rfloor =0$;
    \item[(1)] $\K X. + D_1 + D_2 + \Delta \sim \subs \qq,g. 0$;
    \item[(2)] there exists a $\qq$-b-Cartier $\qq$-divisor $\mathbf N$ on $T$ such that $\bM. \sim_\qq g^* \mathbf N$ as $\mathbb Q$-b-divisors. That is, there is a birational model $g' \colon X' \rightarrow T'$ of $g \colon X \rightarrow T$ such that $(g')^*\mathbf{N}_{T'} \sim_\mathbb{Q}\bM X'.$, $\bM. = \overline{\bM X'.}$, and $\mathbf{N}=\overline{\mathbf{N}_{T'}}$;
    \item[(3)] $g \vert_{D_i} \colon \Supp(D_i) \rar T$ is an isomorphism for $i=1,2$;
    \item[(4)] $(X,D_1+D_2+\Delta,\mathbf{M})/Z$ is generalized plt; and
    \item[(5)] 
    the generic fiber of $g$ is isomorphic to $\pr 1.$, and $g$ is equidimensional of relative dimension 1.
\end{enumerate}
{Furthermore, a weak $\pr 1.$-link is called a {\em standard $\pr 1.$-link} if the following stronger version of (5) holds:}
\begin{itemize}
\item[($5'$)] {$(X_t)_{red}\cong\mathbb{P}^1$ for every $t\in T$.}
\end{itemize}
\end{definition}

\begin{example}
Let $(Z,\Delta_Z)$ be a klt pair.
Then, the pair
\[
(Z\times \pr 1.,\Delta_Z \times \pr 1. + Z \times \{ 0 \} + Z \times \{\infty \}) \rar Z
\]
is a standard $\pr 1.$-link.
\end{example}

\begin{definition}
\label{link.def}
Let $(X, B,\mathbf{M})/Z$ be a generalized dlt pair.
Assume that there exist a morphism $g \colon X \rar T$ over $Z$ such that $\K X. + B + \bM X. \sim \subs \qq,g. 0$.
Let $Z_1$, $Z_2$ be two generalized log canonical centers of $(X, B,\mathbf{M})/Z$.
We say that $Z_1$ and $Z_2$ are {\em directly {weakly} $\pr 1.$-linked} if there is a generalized log canonical center $W$ (or $W=X$ itself) satisfying the following conditions:
\begin{enumerate}
    \item $g (W) = g (Z_1) = g (Z_2)$;
    \item $Z_i \subset W$ for $i=1, 2$; and
    \item over a non-empty open subset of $g(W)$, the generalized pair $(W,B_W+N_W)/Z$ induced by repeated generalized divisorial adjunction {(see Remark \ref{dlt_adj})} onto $W$ is birational to a {weak} $\pr 1.$-link, with each $Z_i$ mapping to one the two horizontal sections of the weak $\pr 1.$-link structure.
\end{enumerate}
We extend the this relation to an equivalence relation as follows.
We say that $Z_1$ and $Z_2$ are {\em {weakly} $\pr 1.$-linked} if $Z_1=Z_2$ or there is a sequence of generalized log canonical centers $W_1, \ldots , W_n$ such that $W_1 = Z_1$, $W_n  =Z_2$ and $W_i$ is directly $\pr 1.$-linked to $W_{i+1}$ for $i=i, \ldots , n-1$.
{Finally, if every weak $\pr 1.$-link is also a standard $\pr 1.$-link, we drop ``weakly'' in the above definitions.}
\end{definition}

\section{Proof of the main statement} \label{sec3}
	
\subsection{Birational case}\label{sec3.1}
In this section, we prove some technical statements that are needed to understand how the non-klt locus behaves under birational modifications.

\begin{lemma} \label{lemma Nklt effective}
Let $(X,B)$ be a sub-pair {over $\Spec(R)$} with $\dim X \leq 3$, and let $\Delta \geq 0$ be an $\rr$-Cartier divisor.
Then, we have $\Nklt(X,B)=\Nklt(X,B+\epsilon \Delta)$ for $0 < \epsilon \ll 1$. 
\end{lemma}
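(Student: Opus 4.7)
The plan is to pass to a log resolution and reduce the statement to a finite combinatorial comparison of divisorial coefficients. Since $\dim X \le 3$, log resolutions exist under our standing assumption, so I would choose a log resolution $\pi \colon Y \to X$ of $(X, \Supp(B) \cup \Supp(\Delta))$, write $\K Y. + B_Y = \pi^\ast(\K X. + B)$, and set $\tilde \Delta \coloneqq \pi^\ast \Delta$, which is effective because $\Delta$ is an effective $\mathbb{R}$-Cartier divisor. This gives
\[
\K Y. + B_Y + \epsilon \tilde \Delta = \pi^\ast(\K X. + B + \epsilon \Delta),
\]
with $B_Y$ and $\tilde \Delta$ supported on the same SNC divisor.

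Next, I would invoke the standard description of the non-klt locus on a log-smooth sub-pair: for any sub-pair $(Y, B')$ with $\Supp(B')$ contained in an SNC divisor, one has $\Nklt(Y, B') = \Supp\bigl((B')^{\geq 1}\bigr)$ set-theoretically. The justification is the discrepancy formula for blow-ups of strata: if a point $y \in Y$ lies only on components $E_i$ of $B'$ with coefficients $b_i < 1$, then any divisor $F$ over $Y$ with center through $y$ obtained by iterated blow-ups of strata has log discrepancy of the form $\mathrm{codim} - \sum b_i > 0$, so $y \notin \Nklt(Y, B')$; conversely, any component with coefficient $\geq 1$ is itself a non-klt divisor. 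Applying this to both $(Y, B_Y)$ and $(Y, B_Y + \epsilon \tilde \Delta)$ and taking images under $\pi$, the problem reduces to showing
\[
\Supp\bigl((B_Y)^{\geq 1}\bigr) = \Supp\bigl((B_Y + \epsilon \tilde \Delta)^{\geq 1}\bigr)
\]
for $0 < \epsilon \ll 1$.

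This last equality is a finite combinatorial check. The divisor $B_Y + \tilde \Delta$ has only finitely many prime components $E_i$. Since $\mu_{E_i}(\tilde \Delta) \geq 0$, a component with $\mu_{E_i}(B_Y) \geq 1$ still satisfies $\mu_{E_i}(B_Y + \epsilon \tilde \Delta) \geq 1$, giving the inclusion $\supseteq$. For the inclusion $\subseteq$, for each component with $\mu_{E_i}(B_Y) < 1$ one may choose $\epsilon$ small enough that $\mu_{E_i}(B_Y) + \epsilon\, \mu_{E_i}(\tilde \Delta) < 1$; the minimum of these finitely many bounds yields a uniform $\epsilon$ that works.

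I do not anticipate any genuine obstacle here. The one piece of the argument worth pausing on is the identification $\Nklt(Y, B') = \Supp((B')^{\geq 1})$ on a log-smooth model, which is standard and crucially uses both log smoothness and the non-negativity of the perturbation $\tilde \Delta$; everything else is purely mechanical once the log resolution is in place.
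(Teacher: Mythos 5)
Your proposal is correct and follows essentially the same route as the paper's proof: pass to a log resolution of $(X,\Supp(B+\Delta))$, identify the non-klt locus on the log smooth model with $\Supp\bigl((B'+\epsilon\Delta')^{\geq 1}\bigr)$, and make the finite coefficient comparison for $0<\epsilon\ll 1$ before pushing forward. The only minor quibble is in your closing remark: the effectivity of $\pi^*\Delta$ is what makes the coefficient comparison work, not the identification of the non-klt locus on the SNC model, which holds regardless.
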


\begin{proof}
Let $f \colon X' \rar X$ be a log resolution of $(X,\Supp(B+\Delta))$.
Set $\K X'. + B' = f^*(\K X. + B)$ and define $\Delta' \coloneqq f^*\Delta$.
By construction, $\Nklt(X,B + t \Delta) = f(\Nklt(X',B' + t \Delta '))$ for every $t \geq 0$.
Since $f$ is a log resolution, we have $\Nklt(X',B' + t \Delta')=\Supp((B'+t\Delta')\sups \geq 1.)$.
As $\Delta' \geq 0$, if $0 < \epsilon \ll 1$ we have $\Supp((B'+\epsilon \Delta')\sups \geq 1.) = \Supp((B')\sups \geq 1.)$.
Then, the claim follows.
\end{proof}

\begin{lemma} \label{lemma Nklt ample}
Let $(X,B)$ be a sub-pair {over $\Spec(R)$} with $\dim X \leq 3$, and let $H$ be an ample divisor.
Then, there exists $0 \leq \Delta \sim_\rr H$ such that $\Nklt(X,B)=\Nklt(X,B+\Delta)$.
\end{lemma}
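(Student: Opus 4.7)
The plan is to construct $\Delta$ explicitly as a rescaling of a general member of $|mH|$ for $m$ sufficiently large; the key observation is that by taking $m$ large, the coefficient of $\Delta$ along any fixed prime divisor on a log resolution becomes arbitrarily small, so the non-klt locus cannot grow.

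I first fix a log resolution $f \colon X' \to X$ of $(X, \Supp B)$ and write $\K X'. + B' = f^\ast(\K X. + B)$. The support of $B'$ consists of finitely many prime divisors, and the $f$-exceptional divisors have finitely many centers in $X$, all of codimension at least $2$. The quantity
\[
\delta \coloneqq \min\bigl\{\, 1 - \mu_E(B') : E \subset X' \text{ a prime with } \mu_E(B') < 1 \,\bigr\}
\]
is a positive real number. Since $H$ is ample, $|mH|$ is very ample for every $m \gg 0$; I pick $m$ with $\tfrac{1}{m} < \delta$ and choose a sufficiently general $D \in |mH|$. By a dimension count in a very ample linear system, for $D$ general I may assume that $D$ is reduced and irreducible, is not a component of $\Supp B$, and does not contain any of the centers of $f$-exceptional divisors; by Bertini (Kleiman's form, valid in positive characteristic for very ample linear systems on smooth varieties) I may further assume that the strict transform $\tilde D$ of $D$ on $X'$ is smooth and meets $\Supp B'$ transversally.

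I set $\Delta \coloneqq \tfrac{1}{m} D \geq 0$, so $\Delta \sim_\qq H$ and hence $\Delta \sim_\rr H$. Because $D$ avoids every $f$-exceptional center, $f^\ast D = \tilde D$ with no exceptional contribution; because $\tilde D$ meets $\Supp B'$ transversally, $f$ is also a log resolution of $(X, \Supp(B + \Delta))$. On $X'$ the coefficient of every prime in $B' + \tfrac{1}{m} \tilde D$ agrees with its coefficient in $B'$, except at $\tilde D$ itself, where it equals $\tfrac{1}{m} < \delta \leq 1$. Hence $\Supp((B' + f^\ast \Delta)^{\geq 1}) = \Supp((B')^{\geq 1})$, and taking images under $f$ gives $\Nklt(X, B + \Delta) = \Nklt(X, B)$.

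The main obstacle is the smoothness of $\tilde D$ and its transversality with $B'$ in positive characteristic; for $|mH|$ very ample with $m$ large and $X'$ smooth, this is Kleiman's Bertini theorem. If a direct appeal is inconvenient, one may instead pass to a further log resolution $g \colon X'' \to X'$ of $(X', B' + \tilde D)$ and bound the multiplicity of $\tilde D$ along every $g$-exceptional center by a constant depending only on $D$; enlarging $m$ then makes $\tfrac{1}{m}$ times each such multiplicity smaller than every positive log discrepancy of $(X, B)$ at a divisor over $X'$, and the same conclusion follows.
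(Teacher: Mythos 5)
The decisive step in your argument is the claim that, for $D \in |mH|$ general, the strict transform $\tilde D$ on $X'$ is smooth and meets $\Supp B'$ transversally, justified by ``Kleiman's Bertini for very ample systems on smooth varieties.'' This is exactly where the argument breaks in characteristic $p$. The divisor $\tilde D = f^*D$ is a general member of the pulled-back system $f^*|mH|$ on $X'$; this system is base-point free but not very ample, and the morphism $X' \to \mathbb{P}^N$ it defines contracts the $f$-exceptional divisors, hence is not an immersion (not even unramified) along $\mathrm{Exc}(f)$. The characteristic-$p$-safe Bertini statements apply to hyperplane sections of a smooth variety that is actually embedded (or at least mapped unramifiedly) into projective space, so they give you nothing along $\mathrm{Exc}(f)$. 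Moreover you cannot avoid this locus: when $\dim X = 3$ the centers of exceptional divisors (in particular the singular curves of $X$) can be one-dimensional, and every member of the ample system $|mH|$ must meet them, so $\tilde D$ necessarily contains positive-dimensional fibers of $f$ and its behaviour there is not controlled by genericity of $D$ downstairs. Without the simple normal crossing statement on $X'$ you cannot read off $\Nklt(X,B+\Delta)$ from the coefficients of $B' + \tfrac{1}{m}\tilde D$, and a small coefficient alone does not rule out new non-klt centers if $\tilde D$ is badly singular along $\mathrm{Exc}(f)$. (Two smaller issues: in the paper's application the lemma is used with $H$ an ample $\rr$-divisor, so $|mH|$ does not literally make sense without first decomposing $H$ into ample $\qq$-pieces; and the ground field is only assumed perfect, possibly finite, so ``general member'' requires extra care.)

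Your fallback does not repair this, because it is circular: the multiplicities of $\tilde D$ along the $g$-exceptional centers depend on the chosen $D$, but $D$ lies in $|mH|$ and the coefficient of $\Delta$ must be exactly $1/m$ to keep $\Delta \sim_\rr H$; enlarging $m$ therefore forces a new choice of $D$, and with it new multiplicities. To close the loop you would need a bound on the relevant log discrepancies (or log canonical thresholds) of $\tilde D$ that is uniform in $m$ for general $D \in |mH|$, and this is neither stated nor proved. The paper sidesteps all of this by making its generic choices upstairs rather than downstairs: it picks an effective $f$-exceptional $C$ with $-C$ $f$-ample, observes that $f^*H - \mu C$ is ample on the smooth variety $X'$, chooses there an effective representative $\Delta'$ with small coefficients whose support is simple normal crossing with $\Supp(B'+C)$ (citing \cite{BMP+}*{Remark 2.16}, which is where the positive/mixed characteristic Bertini-type input lives), and then pushes forward, using that $C$ is exceptional to get $f_*\Delta' \sim_\rr H$. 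If you want to salvage your pull-back approach, you need either a characteristic-$p$ Bertini statement valid for the non-immersive morphism $X' \to \mathbb{P}^N$, or a uniform-in-$m$ singularity bound for general members of $|mH|$; as written, the proof has a genuine gap.
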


\begin{proof}
We may take a normal projective compactification of $(X,B)$ to assume that $X$ is projective over $\Spec(R)$.

Let $f \colon X' \rar X$ be a log resolution of $(X,B)$ with an $f$-exceptional divisor $C \geq 0$ such that $-C$ is $f$-ample, which exists by Proposition \ref{prop:resolution}.
Note that the effectivity of $C$ follows from \cite{BMP+}*{Lemma 2.16}.
We write $K_{X'}+B'=f^*(K_X+B)$, where $f_*K_{X'}=K_X$.
By Lemma \ref{lemma Nklt effective}, $\Nklt(X',B')=\Nklt(X',B'+\mu C)$ for $0 < \mu \ll 1$.
Furthermore, we can ensure that, for the same choice of $\mu$, $f^*H - \mu C$ is ample.
	
By construction, $\Supp(B') \cup \Supp(C)$ is simple normal crossing.
Since $f^*H-\mu C$ is ample, we may find $0 \leq \Delta' \sim_\rr f^*H-\mu C$ such that $\lfloor \Delta' \rfloor = 0$, $\Supp(\Delta')$ shares no components with $\Supp(B'+C)$, and $B' + \mu C + \Delta'$ has simple normal crossing support, by \cite{BMP+}*{Remark 2.18}. 
Then, it follows that $\Nklt(X',B'+\mu C + \Delta')=\Nklt(X',B')$.
Define $\Delta \coloneqq f_*\Delta'$.
Since $C$ is $f$-exceptional, we have $\Delta \sim_\rr H$. Furthermore, we have 
\[
\K X'. + B' + \mu C + \Delta' = f^*(\K X. + B +\Delta) .
\]
Therefore, we have
\[
\Nklt(X,B+\Delta)=f(\Nklt(X',B'+ \mu C + \Delta'))= f(\Nklt(X',B')) = \Nklt(X,B),
\]
and the claim follows.
\end{proof}

\begin{lemma} \label{replacement_birkar}
{
Let $(X,B)$ be a $\mathbb{Q}$-factorial dlt pair over $\Spec(R)$.
Let $f \colon X \rar S$ be a birational contraction over $\Spec(R)$ such that $-(\K X. + B)$ is ample over $S$.
Then, $\Nklt(X,B)\cap f^{-1}(s)$ is connected as $k(s)$-scheme.}
\end{lemma}

\begin{proof}
Since $(X,B)$ is dlt, we have that $\Nklt(X,B)=\Supp \lfloor B \rfloor$.
Let $E \coloneqq \delta \lfloor B \rfloor$ for $0 < \delta \ll 1$ rational, so that, by Lemma \ref{lemma Nklt effective}, $\Nklt(X,B+E)=\Nlc(X,B+E)=\Supp \lfloor B \rfloor$.
Then, it suffices to prove the claim for $\Nlc(X,B+E)$.
Since $-(\K X. + B)$ is ample over $S$ and $0 < \delta \ll 1$, then so is $-(\K X. + B + E)$.
Thus, we may choose an $f$-ample divisor $M$ such that $\K X. + B + E + M \sim_{\mathbb{Q},f} 0$.

First, we claim that it is possible to run an $(K_X+B+M)$-MMP over $S$.
To see this, fix some ample divisor $A$ on $X$.
If $K_X+B+M$ is not nef over $S$, then we run a $(K_X+B+M)$-MMP with scaling of $A$.
For any $\epsilon>0$, we can write $K_X+B+M+\epsilon A\sim_{\mathbb{Q}} K_X+(1-\epsilon)B+\Delta_\epsilon$ where $\Delta_{\epsilon}\sim_{\mathbb{Q}} M+A+\epsilon B$ is chosen such that $(X,(1-\epsilon)B+\Delta_{\epsilon})$ is klt.
Therefore, we can perform a step of the $(K_X+B+M)$-MMP with scaling of $A$, and any finite sequence of steps of this MMP is also a $(K_X+(1-\epsilon)B+\Delta_\epsilon)$-MMP for $0<\epsilon\ll1$, and so we can continue to run it, while possibly shrinking $\epsilon$ repeatedly.

Since $\K X. + B + M \sim \subs \qq,f. -E$, this MMP is a $(-E)$-MMP.
Therefore, in every step of the MMP, the exceptional locus intersects $\Supp(E) \subset \Supp(\rddown{B})$.
Hence, this MMP terminates by Proposition \ref{prop:special_termination}, i.e., special termination for generalized pairs.
	
Notice that no connected component of $E$ can be completely contracted by this MMP, because every contraction is positive for $E$.
Also, we claim that distinct connected components of $\Supp(E)$ do not become connected by the MMP.
Indeed, suppose two such components $\Sigma$ and $\Xi$ do become connected by a step of the MMP $X_1\dashrightarrow X_2$. 
Let $S_1$ and $T_1$ be two prime components in $\Sigma$ and $\Xi$ on $X_1$, respectively, that are not contracted by $X_1\dashrightarrow X_2$, but their strict transforms $S_2$ and $T_2$ are connected by this step of the MMP.
Then, $(X_1,B_{X_1}-\epsilon(\rddown{B_{X_1}}-S_1-T_1),\overline{M})$ is generalized plt, and, for $\epsilon$ sufficiently small, $X_1 \drar X_2$ is also a step of the MMP for this generalized pair.
Then, we reach a contradiction, since the generalized plt property is preserved under the MMP, but $(X_2,B_{X_2}-\epsilon(\rddown{B_{X_2}}-S_2-T_2),\overline{M})$ is not generalized plt as $S_2 \cap T_2 \neq \emptyset$.
{Indeed, by \cite{birkar_complements}*{2.13.(2)}, the generalized dlt property is preserved by the MMP.
Then, if at any step of an MMP the generalized plt property were broken, it would mean that two disjoint prime components of the reduced part of the boundary would be connected, creating a non-empty intersection.
By the generalized dlt property, this new intersection would be a generalized log canonical center.
Thus, if the generalized plt property were not preserved, new generalized log canonical centers would appear, thus contradicting the negativity of the MMP.}

Since $f$ is a birational morphism, the MMP terminates with a minimal model $X_N$ on which $-E_{X_N}$ is nef over $S$, and no prime component of its support can dominate $S$.
Suppose $C$ is a curve in a fiber over $s$.
Then $C\cdot E_{Y_N} \leq 0$, that is, $C$ is either contained in $\Supp(E_{X_N})$ or completely disjoint from it.
Thus each fiber is entirely contained in $\Supp(E_{X_N})$ or completely disjoint from it. 
In particular, $\Supp(E_{X_N})$ is connected in the neighborhood of every fiber of $X_N \rar S$.
As the MMP $X \drar X_N$ preserves the connected components of $E$, this concludes the proof.
\end{proof}

We recall the birational version of the three-dimensional Shokurov--Koll\'{a}r connectedness principle, which is due to Nakamura and Tanaka {in the case of a perfect field of positive characteristic} \cite{NT20}*{Theorem 2.15}.

\begin{lemma}[cf. \cite{NT20}*{Theorem 2.15}]\label{lem:ample_to_big_nef}
Let $f \colon X \rar V$ be a projective birational morphism of normal quasi-projective varieties of dimension at most 3 over {$\Spec(R)$}.
Let $(X,\Delta)$ be a sub-pair over {$\Spec(R)$} such that $-(\K X. + \Delta)$ is $f$-nef and $f_* \Delta$ is effective.
Then, the induced morphism $\Nklt(X,\Delta) \rar V$ has connected fibers.
\end{lemma}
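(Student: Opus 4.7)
The plan is to directly invoke \cite{NT20}*{Theorem 2.15} when $\dim X = 3$ and to reduce the lower-dimensional cases to that one via a product-with-$\pr 1.$ trick, as suggested in the paragraph preceding the lemma. If $\dim X \leq 1$, then the birational morphism $f$ is automatically an isomorphism (being quasi-finite between normal varieties), so every fiber is a single point and the claim is trivial. The substantive case beyond the cited threefold theorem is therefore $\dim X = 2$.

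For $\dim X = 2$, I would set $\tilde X \coloneqq X \times_k \pr 1.$, $\tilde V \coloneqq V \times_k \pr 1.$, $\tilde f \coloneqq f \times \id_{\pr 1.}$, and $\tilde \Delta \coloneqq \pi_1^* \Delta$, where $\pi_1 \colon \tilde X \to X$ and $\pi_2 \colon \tilde X \to \pr 1.$ are the projections, and then check that the hypotheses lift to this triple. First, $\tilde f$ is projective and birational between normal quasi-projective threefolds, and $\tilde f_* \tilde \Delta$ is the pullback of the effective divisor $f_* \Delta$ to $\tilde V$, hence effective. Second, from $K_{\tilde X} = \pi_1^* K_X + \pi_2^* K_{\pr 1.}$ one obtains
\[
-(K_{\tilde X} + \tilde \Delta) = \pi_1^*(-(K_X + \Delta)) + \pi_2^*(-K_{\pr 1.}).
\]
Any $\tilde f$-contracted irreducible curve $C$ lies in some fiber $f^{-1}(v) \times \{t\}$, so $\pi_2^*(-K_{\pr 1.}) \cdot C = 0$ while $\pi_1^*(-(K_X + \Delta)) \cdot C \geq 0$ by $f$-nefness of $-(K_X+\Delta)$; hence $-(K_{\tilde X} + \tilde \Delta)$ is $\tilde f$-nef, and \cite{NT20}*{Theorem 2.15} applies.

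To translate the conclusion back, I would observe that taking the product with $\pr 1.$ of a log resolution of $(X, \Delta)$ yields a log resolution of $(\tilde X, \tilde \Delta)$ along which discrepancies are unchanged, so $\Nklt(\tilde X, \tilde \Delta) = \Nklt(X, \Delta) \times \pr 1.$. For any $(v, t) \in \tilde V$ one then has $\tilde f^{-1}(v, t) \cap \Nklt(\tilde X, \tilde \Delta) = \bigl[f^{-1}(v) \cap \Nklt(X, \Delta)\bigr] \times \{t\}$, which has the same connected components as $f^{-1}(v) \cap \Nklt(X, \Delta)$; connectedness of the former thus implies connectedness of the latter. The main (and minor) obstacle is simply the bookkeeping required to confirm that the product preserves nefness over the base, effectivity of the pushforward, and the non-klt locus; all the substantive input is already contained in the threefold statement of \cite{NT20}.
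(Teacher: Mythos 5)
Your proposal is correct and follows exactly the route the paper itself indicates: the lemma is \cite{NT20}*{Theorem 2.15} in dimension three, and the surface case is reduced to it by taking the product with $\pr 1.$, which is precisely the reduction sketched in the sentence preceding the lemma. Your verification of the bookkeeping (nefness over the base, effectivity of the pushforward, and $\Nklt(\tilde X,\tilde\Delta)=\Nklt(X,\Delta)\times\pr 1.$) is accurate and simply spells out what the paper leaves implicit.
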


\begin{proof}
	This is proved in the case of a variety over a perfect field in \cite{NT20}*{Theorem 2.15}.
	The proof given there goes through verbatim in our situation except for the use of \cite{birkar_p}*{Theorem 1.8} in the proof of \cite{NT20}*{Lemma 2.13}.
	Indeed, \cite{birkar_p}*{Theorem 1.8} uses \cite{birkar_p}*{Lemma 9.2}, for which the mixed characteristic replacement \cite{BMP+}*{Lemma 2.34} is weaker.
	That is because log resolutions in mixed characteristic cannot be guaranteed to be isomorphisms over the regular locus.
	This issue is overcome with Lemma \ref{replacement_birkar}, which replaces  \cite{birkar_p}*{Theorem 1.8} in the proof of \cite{NT20}*{Lemma 2.13}.
	While the statement of \cite{NT20}*{Theorem 2.15} only refers to threefolds, the connectedness principle clearly holds for surfaces: it suffices to take the product with $\pr 1.$ to reduce to the threefold case.
\end{proof}

\begin{proposition} \label{MMp gpair connectedness}
Let $(X,B,\bM.)/Z$ be a generalized pair with $\dim X \leq 3$ over $\Spec(R)$.
Let $\pi \colon X \rar Y$ be a birational contraction over $Z$, with $Y$ normal.
Assume that $-(\K X. + B + M)$ is $\pi$-nef.
Then, $\Nklt(X,B,\bM.)$ is connected in a neighborhood of any fiber of $\pi$.
\end{proposition}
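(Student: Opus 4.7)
The plan is to reduce this generalized-pair statement to the sub-pair version of the connectedness principle recorded in Lemma \ref{lem:ample_to_big_nef}, by absorbing the moduli b-divisor $\bM.$ into the sub-boundary on a sufficiently high log resolution. Since $\dim X \leq 3$, I would first take a projective birational morphism $\phi \colon X' \to X$ that is a log resolution of $(X, \Supp B)$ and on which $\bM.$ descends. Setting $M' \coloneqq \bM X'.$, which is nef over $Z$ by the b-nefness of $\bM.$, define a sub-boundary $B'$ on $X'$ by
\[
K_{X'} + B' + M' = \phi^*(K_X + B + \bM X.),
\]
and let $g \coloneqq \pi \circ \phi \colon X' \to Y$.

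Next, I would verify that the sub-pair $(X', B')$ and the morphism $g$ meet the hypotheses of Lemma \ref{lem:ample_to_big_nef}. The pushforward $g_* B' = \pi_*\phi_* B' = \pi_* B$ is effective because $B \geq 0$. For the nefness condition,
\[
-(K_{X'} + B') = -\phi^*(K_X + B + \bM X.) + M',
\]
where the first summand is $g$-nef by the projection formula applied to the $\pi$-nef divisor $-(K_X + B + \bM X.)$, and the second is nef over $Z$ and hence over $Y$. Applying Lemma \ref{lem:ample_to_big_nef} then yields that $\Nklt(X', B') \to Y$ has connected fibers.

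The final step is to transfer this conclusion back along $\phi$. Because $\bM.$ descends to $X'$, its trace pulls back along any further birational modification $X'' \to X'$, and therefore the generalized log discrepancies of $(X, B, \bM.)$ coincide with the ordinary log discrepancies of the sub-pair $(X', B')$. Combined with the simple normal crossing condition on $\Supp(B')$, this gives the set-theoretic equality $\phi(\Nklt(X', B')) = \Nklt(X, B, \bM.)$. Consequently, for every $y \in Y$,
\[
\pi^{-1}(y) \cap \Nklt(X, B, \bM.) = \phi\bigl(g^{-1}(y) \cap \Nklt(X', B')\bigr),
\]
which is the continuous image of a connected set, and hence connected; properness of $\pi$ upgrades this to connectedness in an open neighborhood of each fiber.

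The only subtle point is the final identification of non-klt loci, where the descent hypothesis for $\bM.$ at $X'$ is essential: without it, further blow-ups above $X'$ could in principle produce non-klt places not accounted for by $\Supp(B')^{\geq 1}$, and the equality $\phi(\Nklt(X', B')) = \Nklt(X, B, \bM.)$ could fail. Once this identification is in hand, the rest is bookkeeping with pullbacks and the projection formula.
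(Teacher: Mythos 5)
Your argument is correct, and it takes a genuinely different route from the paper. The paper also passes to a model where $\bM.$ descends, but only to exploit relative bigness: since $\pi$ is birational, $-(K_X+B+M)$ is $\pi$-big, so on the high model one can decompose the nef-and-big part as ample plus small effective and then, via Lemma \ref{lemma Nklt effective} and Lemma \ref{lemma Nklt ample}, produce an effective divisor $\Gamma$ on $X$ itself with $\Nklt(X,B+\Gamma)=\Nklt(X,B,\bM.)$ and $K_X+B+\Gamma\sim_{\rr,\pi}0$; Lemma \ref{lem:ample_to_big_nef} is then applied to $\pi\colon X\to Y$ and the honest pair $(X,B+\Gamma)$. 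You instead apply Lemma \ref{lem:ample_to_big_nef} in its sub-pair form directly to $(X',B')$ on a log resolution where $\bM.$ descends, with the composite birational morphism $g=\pi\circ\phi\colon X'\to Y$, using only that $-(K_{X'}+B')=-\phi^*(K_X+B+\bM X.)+\bM X'.$ is a sum of two $g$-nef divisors and that $g_*B'=\pi_*B\geq 0$; connectedness is then pushed down along $\phi$ via $\Nklt(X,B,\bM.)=\phi(\Nklt(X',B'))$. Your route is shorter, as it bypasses the two perturbation lemmas and the nef-and-big decomposition entirely (bigness enters only through the birationality hypothesis built into the cited lemma); the paper's route has the by-product of trading the generalized pair for an honest pair $(X,B+\Gamma)$ on $X$ with the same non-klt locus and with $K_X+B+\Gamma$ numerically trivial over $Y$, which keeps the statement entirely on $X$. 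Your final step, passing from connectedness of the fibers of $\Nklt(X',B')\to Y$ to connectedness of $\pi^{-1}(y)\cap\Nklt(X,B,\bM.)$ and then to connectedness in a neighborhood of the fiber, is at the same level of detail as the paper's own conclusion from Lemma \ref{lem:ample_to_big_nef}, so no gap there.
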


\begin{proof}
Since $\pi$ is birational, $-(\K X. + B +M)$ is $\pi$-big. 
Thus, we have $-(\K X. + B + M) \sim \subs \rr,\pi. A$, where $A$ is $\pi$-nef and $\pi$-big.
Let $f \colon X' \rar X$ be a higher model of $X$ to which $\bM.$ descends.
Let $(X',B'+M')$ denote the {crepant pull-back} of $(X,B+M)$ on $X'$, and write $A' \coloneqq f^*A$.
Notice that $\Nklt(X',B',\bM.)=\Nklt(X',B')$, as $\bM.$ descends onto $X'$.
	
Since $A'+M'$ is nef and big over $Y$, there exists $E' > 0$ such that for any $\epsilon > 0$ {sufficiently small} we have $A'+M' \sim_{\rr,Y} H'_\epsilon + \epsilon E'$, where $H'_\epsilon$ is ample.
By Lemma \ref{lemma Nklt effective}, $\Nklt(X',B'+\epsilon E') = \Nklt(X',B')$ for $0 < \epsilon \ll 1$.
Fix such $\epsilon$.
Then, by Lemma \ref{lemma Nklt ample}, there exists $0 \leq \Delta'_\epsilon \sim_\rr H'_\epsilon$ such that $\Nklt(X',B'+\epsilon E' + \Delta'_\epsilon) = \Nklt(X',B'+\epsilon E')$.
Define $\Gamma' \coloneqq \Delta'_\epsilon + \epsilon E'$ and $\Gamma \coloneqq f_* \Gamma'$.
	
By construction, we have $\K X'. + B' + \Gamma' = f^*(\K X. + B + \Gamma)$
and $\Nklt(X,B+\Gamma) = f(\Nklt(X',B'+\Gamma'))$.
Since
\[
\Nklt(X',B' + \Gamma') = \Nklt(X',B') = \Nklt(X',B',\bM.),
\]
we have \[
\Nklt(X,B,\bM.)=f(\Nklt(X',B',\bM.))=f(\Nklt(X',B'))=f(\Nklt(X',B'+\Gamma'))=\Nklt(X,B+\Gamma).
\]
Since $\K X. + B + \Gamma \sim \subs \rr,\pi. 0$, we conclude by Lemma \ref{lem:ample_to_big_nef}.
\end{proof}

\subsection{Connectedness of the non-lc locus} \label{sec3.2}
In this section, we address the connectedness of the non-log canonical locus.

\begin{proposition}\label{prop:nlc_vertical}
Let $f \colon X \rar S$ be a contraction between normal varieties defined {over $\Spec(R)$}.
Assume that $\dim X \leq 3$ and that $(X,B)$ is a pair with rational coefficients and $-(\K X. + B)$ nef over $S$.
Assume that $\Nlc(X,B)$ is vertical over $S$.
Then, $\Nlc(X,B)\cap f^{-1}(s)$ is connected as $k(s)$-scheme.
\end{proposition}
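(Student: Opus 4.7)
The plan is to reduce to the birational Nklt connectedness for generalized pairs (Proposition~\ref{MMp gpair connectedness}), exploiting the verticality of $\Nlc(X,B)$ to transfer information from $g$-fibers of a log resolution to $f$-fibers of the original contraction. After localizing around $s$, we may assume $S$ is affine. By the verticality assumption, the image $W := f(\Nlc(X,B))$ is a proper closed subset of $S$; if $s \notin W$, then $\Nlc(X,B) \cap f^{-1}(s)$ is empty and the statement is trivial, so assume $s \in W$.

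Take a log resolution $g \colon Y \rar X$ of $(X, \Supp B)$, chosen to factor through a $\qq$-factorial dlt modification of $(X, B^{\leq 1})$ so that $B_Y$ defined by $K_Y + B_Y = g^\ast(K_X+B)$ is effective. Let $N_Y := (B_Y)^{>1}$ be the effective divisor of non-lc places; then $g(\Supp N_Y) = \Nlc(X,B)$, and $\Supp N_Y$ is $(f\circ g)$-vertical. Define the truncated boundary $\tilde B_Y := B_Y - \bigl(N_Y - (N_Y)_{\mathrm{red}}\bigr)$, which has coefficients in $[0,1]$, so $(Y, \tilde B_Y)$ is a log canonical pair. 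Setting $M := N_Y - (N_Y)_{\mathrm{red}} \geq 0$, we get $K_Y + \tilde B_Y + M = g^\ast(K_X+B)$, where $M$ is effective, $(f\circ g)$-vertical, and $\Supp M = g^{-1}(\Nlc(X,B))$.

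After possibly further blow-ups we arrange that $\overline{M}$ is a b-nef b-divisor on $Y$, turning $(Y, \tilde B_Y, \overline{M})$ into a generalized log canonical pair with
\[
-(K_Y + \tilde B_Y + M) = -g^\ast(K_X+B)
\]
$g$-numerically trivial, and in particular $g$-nef. Thus Proposition~\ref{MMp gpair connectedness} applies to $g \colon Y \rar X$, and the Nklt locus of $(Y, \tilde B_Y, \overline{M})$, whose image under $g$ contains $\Nlc(X,B)$, is connected in neighborhoods of $g$-fibers. To lift this local statement to a statement about fibers of $f$, I would run a $(K_Y + \tilde B_Y + M)$-MMP over $S$ with scaling of an $(f\circ g)$-ample divisor, using Proposition~\ref{prop:non-pseff-termination-2} and the special termination of Proposition~\ref{prop:special_termination} to guarantee termination; the verticality of $M$ ensures that no component of $\Supp M$ gets contracted, so the non-lc structure is preserved throughout. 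In the final model $Y'$, the induced birational contraction to an auxiliary variety $V$ over $S$ has the property that its fiber over the image of $s$ meets the strict transform of $\Supp M$ in a scheme whose image under the birational map back to $X$ recovers $\Nlc(X,B) \cap f^{-1}(s)$, and a further application of Proposition~\ref{MMp gpair connectedness} to $Y' \rar V$ yields the desired connectedness.

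The main obstacle is making the MMP terminate \emph{and} preserve the non-lc components: a naive MMP on $(K_Y + \tilde B_Y + M)$ would contract $(K_Y + \tilde B_Y + M)$-negative extremal rays, and one must verify, using that $M$ is vertical, that such rays never have positive intersection with $\Supp M$. The verticality of $\Nlc(X,B)$ plays a critical role, as it ensures that all non-lc components are concentrated over $W \subsetneq S$, so that an MMP run over $S$ can be guided so as to isolate them rather than destroy them.
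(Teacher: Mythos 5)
There is a genuine gap, and it occurs at the point where your argument is supposed to get off the ground. You set $M := N_Y - (N_Y)_{\mathrm{red}} \geq 0$ and claim that ``after possibly further blow-ups'' the Cartier closure $\overline{M}$ becomes a b-nef b-divisor, making $(Y,\tilde B_Y,\overline{M})$ a generalized pair. This cannot be arranged: since $\overline{M}$ descends to $Y$ by definition, its trace on any higher model is just the pullback of $M$, so b-nefness of $\overline{M}$ is equivalent to $M$ itself being nef over the relevant base; an effective divisor supported on the non-lc places is in general not nef over $X$ or $S$ (its $g$-exceptional part is excluded by the negativity lemma unless trivial), and further blow-ups change nothing. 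Consequently the generalized-pair structure you invoke does not exist, and the appeals to Proposition \ref{MMp gpair connectedness}, Proposition \ref{prop:non-pseff-termination-2} and Proposition \ref{prop:special_termination} are not justified: with $M$ effective, your $(K_Y+\tilde B_Y+M)$-MMP is just a $(K_Y+B_Y)$-MMP for a non-lc pair, to which none of these results applies. Moreover, Proposition \ref{MMp gpair connectedness} only treats \emph{birational} contractions, so applying it to $g\colon Y\to X$ gives connectedness over points of $X$ (essentially the known birational statement), not over fibers of $f\colon X\to S$; and your closing sentence, which applies it to $Y'\to V$ to conclude, is invoking it precisely in the non-birational situation that the proposition does not cover and that is being proved.

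The paper's mechanism is different and is what your sketch is missing. On a crepant $\qq$-factorial dlt model $\phi\colon Y\to X$ one writes $\phi^*(K_X+B)=K_Y+B_Y+E_Y$ with $(Y,B_Y)$ dlt, $E_Y\geq 0$ and $\Supp E_Y\subset\lfloor B_Y\rfloor$, and takes as nef part the genuinely nef divisor $M:=-\phi^*(K_X+B)$ (nef over $S$ because $-(K_X+B)$ is). One then runs a $(K_Y+B_Y+M)$-MMP over $S$: since $K_Y+B_Y+M\sim_{\qq,S}-E_Y$, every contracted ray is \emph{positive} against $E_Y$ (the opposite of what you try to ensure in your last paragraph), which simultaneously guarantees termination via special termination (the flipping locus always meets $\lfloor B_Y\rfloor$), prevents any connected component of $E_Y$ from being contracted, and (via a generalized plt argument) prevents distinct components from being joined. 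On the final model $-E_{Y_N}$ is nef over $S$, and verticality of $E_{Y_N}$ then forces every fiber of $Y_N\to S$ to be either entirely contained in $\Supp E_{Y_N}$ or disjoint from it; this dichotomy is the step that converts verticality into connectedness near the fiber over $s$, and it has no counterpart in your proposal.
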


\begin{proof}
We follow the argument of \cite{birkar_p}*{Theorem 1.8}, except in our case it is easier.
	
Let $\phi \colon Y\to X$ be a $\bQ$-factorial dlt model of $(X,B)$.
That is to say, let $\psi \colon W\to X$ be a log resolution of $(X,B)$, and run a $(K_W+{\widetilde{B}}^{\leq 1}_W+F)$-MMP$/X$, where $\widetilde{B}_W$ is the strict transform of $B$, $\widetilde{B}_W^{\leq 1}$ indicates truncation of the coefficients at $1$, and $F$ is the reduced $\psi$-exceptional divisor.
We reach a $\mathbb{Q}$-factorial model $Y$ on which $K_Y+\widetilde{B}_Y^{\leq 1}{+F_Y}$ is nef over $X$ and $(Y, \widetilde{B}_Y^{\leq 1}+F_Y)$ is dlt, where again $\widetilde{B}_Y$ is the birational transform of $B$, and $F_Y$ is the reduced $\phi$-exceptional divisor.
Now, we define $B_Y \coloneqq \widetilde{B}_Y^{\leq 1}+F_Y$, so that $(Y,B_Y)$ is dlt, and we set $E_Y \coloneqq \phi^*(K_X+B)-(K_Y+B_Y^{\leq 1}+F_Y)$.
By construction, $E_Y$ is supported on the strict transform of $\lfloor B \rfloor$ and on the $\phi$-exceptional divisors.
Thus, we have that $\Supp(E_Y) \subset \lfloor B_Y \rfloor$.
Furthermore, since $-E_Y$ is nef over $X$ and $\phi_*E_Y=B-B^{\leq 1}\geq 0$, $E_Y\geq 0$ by the negativity lemma.
Therefore, since $E_Y \geq 0$, $\Supp(E_Y) \subset \lfloor B_Y \rfloor$, and $(Y,B_Y)$ is dlt, it follows that $\Supp(E_Y)=\Nlc(Y,B_Y+E_Y)$.
For brevity, we write $g = f \circ \phi$.
Let $M$ be a $g$-nef divisor such that $K_Y+B_Y+E_Y+M\sim_{\bQ,g}0$.
Since $\phi(\mathrm{Nlc}(Y,B_Y+E_Y)\cap g^{-1}(s))=\mathrm{Nlc}(X,B)\cap f^{-1}(s)$, it is enough to prove the statement for $(Y,B_Y+E_Y)$.

First, we claim that it is possible to run an $(K_Y+B_Y+M)$-MMP over $S$.
To see this, fix some ample divisor $A$ on $Y$.
If $K_Y+B_Y+M$ is not nef over $S$, then we run a $(K_Y+B_Y+M)$-MMP with scaling of $A$.
For any $\epsilon>0$, we can write $K_Y+B_Y+M+\epsilon A\sim_{\mathbb{Q}} K_Y+(1-\epsilon)B_Y+\Delta_\epsilon$ where $\Delta_{\epsilon}\sim_{\mathbb{Q}} M+A+\epsilon B_Y$ is chosen such that $(Y,(1-\epsilon)B_Y+\Delta_{\epsilon})$ is klt.
Therefore, we can perform a step of the $(K_Y+B_Y+M)$-MMP with scaling of $A$, and any finite sequence of steps of this MMP is also a $(K_Y+(1-\epsilon)B_Y+\Delta_\epsilon)$-MMP for $0<\epsilon\ll1$, and so we can continue to run it, while possibly shrinking $\epsilon$ repeatedly.

Since $\K Y. + B_Y + M \sim \subs \qq,g. -E_Y$, this MMP is a $(-E_Y)$-MMP.
Therefore, in every step of the MMP, the exceptional locus intersects $\Supp(E_Y) \subset \Supp(\rddown{B_Y})$.
Hence, this MMP terminates by Proposition \ref{prop:special_termination}, i.e., special termination for generalized pairs.
	
Notice that no connected component of $E_Y$ can be completely contracted by this MMP, because every contraction is positive for $E_Y$.
Also, we claim that distinct connected components of $\Supp(E_Y)$ do not become connected by the MMP.
Indeed, suppose two such components $\Sigma$ and $\Xi$ do become connected by a step of the MMP $Y_1\dashrightarrow Y_2$. 
Let $S_1$ and $T_1$ be two prime components in $\Sigma$ and $\Xi$ on $Y_1$, respectively, that are not contracted by $Y_1\dashrightarrow Y_2$, but their strict transforms $S_2$ and $T_2$ are connected by this step of the MMP.
Then, $(Y_1,B_{Y_1}-\epsilon(\rddown{B_{Y_1}}-S_1-T_1),\overline{M})$ is generalized plt, and, for $\epsilon$ sufficiently small, $Y_1 \drar Y_2$ is also a step of the MMP for this generalized pair.
Then, we reach a contradiction, since the generalized plt property is preserved under the MMP, but $(Y_2,B_{Y_2}-\epsilon(\rddown{B_{Y_2}}-S_2-T_2),\overline{M})$ is not generalized plt as $S_2 \cap T_2 \neq \emptyset$.
{Indeed, by \cite{birkar_complements}*{2.13.(2)}, the generalized dlt property is preserved by the MMP.
Then, if at any step of an MMP the generalized plt property were broken, it would mean that two disjoint prime components of the reduced part of the boundary would be connected, creating a non-empty intersection.
By the generalized dlt property, this new intersection would be a generalized log canonical center.
Thus, if the generalized plt property were not preserved, new generalized log canonical centers would appear, thus contradicting the negativity of the MMP.}

Since $E_Y$ does not dominate $S$ by assumption, the MMP terminates with a minimal model $Y_N$ on which $-E_{Y_N}$ is nef over $S$.
In particular, no prime component of its can dominate $S$.
Suppose $C$ is a curve in a fiber over $s$.
Then $C\cdot E_{Y_N} \leq 0$, that is, $C$ is either contained in $\Supp(E_{Y_N})$ or completely disjoint from it.
Thus each fiber is entirely contained in $\Supp(E_{Y_N})$ or completely disjoint from it. 
In particular, $\Supp(E_{Y_N})$ is connected in the neighborhood of every fiber of $Y_N \rar S$.
As the MMP $Y \drar Y_N$ preserves the connected components of $E_Y$, this concludes the proof.
\end{proof}

\subsection{Main proof} \label{sec3.3}
In this section, we prove the main results of this work.

\begin{proposition}\label{prop:nklt_dominates}
Let $f \colon X \rar S$ be a projective morphism between quasi-projective varieties defined {over $\Spec(R)$}.
Assume that $\dim X \leq 3$ and that $(X,B)$ is a pair with rational coefficients and $-(\K X. + B)$ nef over $S$.
Assume that $\Nklt(X,B)$ is vertical over $S$, and that $f \sups -1. (s)$ is connected as $k(s)$-scheme.
Then, $f \sups -1. (s) \cap \Nklt(X,B)$ is connected as $k(s)$-scheme. 
\end{proposition}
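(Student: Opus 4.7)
Plan: The plan is to mirror Proposition \ref{prop:nlc_vertical}, but with the non-lc divisor $E_Y$ replaced by the full reduced boundary $\lfloor B_Y \rfloor$ of a dlt modification (so that the relevant positivity sees all of $\Nklt$), and to rule out the merging of distinct connected components along the resulting MMP by applying Proposition \ref{MMp gpair connectedness} to each individual step.

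First, take a $\mathbb{Q}$-factorial dlt modification $\phi \colon Y \to X$ of $(X,B)$, so that $\phi^{*}(K_X+B) = K_Y + B_Y + E_Y$ with $(Y,B_Y)$ dlt and $\Supp E_Y \subseteq \Supp \lfloor B_Y \rfloor$, and set $g = f \circ \phi$. The hypothesis that $\Nklt(X,B) = \phi(\Supp \lfloor B_Y \rfloor)$ is vertical over $S$ gives that $\Supp \lfloor B_Y \rfloor$ is vertical; applying Proposition \ref{MMp gpair connectedness} to $\phi$ itself, the restriction $\phi|_{\Supp \lfloor B_Y \rfloor}$ has geometrically connected fibers, so the desired statement reduces to the connectedness of $\Supp \lfloor B_Y \rfloor \cap g^{-1}(s)$ as a $k(s)$-scheme.

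Next, pick a $g$-nef $\mathbb{Q}$-divisor $A$ with $A \sim_{\mathbb{Q},g} -\phi^{*}(K_X+B)$, so that $K_Y + B_Y + E_Y + A \sim_{\mathbb{Q},g} 0$. First run Proposition \ref{prop:nlc_vertical}'s $(K_Y + B_Y + A)$-MMP over $S$; it is $(-E_Y)$-positive and terminates at a model $Y'$ with $-E_{Y'}$ nef over $S$. Either $\Supp E_{Y'} \supseteq (g')^{-1}(s)$, in which case $f^{-1}(s) \subseteq \Nlc(X,B) \subseteq \Nklt(X,B)$ and the conclusion is tautological, or $E_{Y'}$ vanishes in a neighborhood of $(g')^{-1}(s)$ and hence $K_{Y'}+B_{Y'}+A_{Y'} \sim_{\mathbb{Q},g'} 0$ there; in the latter case replace $Y$ by $Y'$ to assume $E_Y = 0$ near $g^{-1}(s)$.

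Now consider the generalized klt pair $(Y,\Theta_Y,\overline{A})$ with $\Theta_Y := B_Y - \lfloor B_Y \rfloor$, and run the $(K_Y + \Theta_Y + A)$-MMP over $S$. Here $K_Y + \Theta_Y + A \sim_{\mathbb{Q},g} -\lfloor B_Y \rfloor$ is not pseudo-effective over $S$, so the MMP exists and terminates by a Proposition \ref{prop:non-pseff-termination-2}-style argument (absorbing $A$ into the klt boundary via Lemma \ref{lemma add ample}), ending in a Mori fiber contraction $h \colon Y_N \to W$ on which $\lfloor B_{Y_N} \rfloor$ is $h$-ample. Each step is $\lfloor B_Y \rfloor$-positive, so no connected component of $\Supp \lfloor B_{Y_i} \rfloor$ can be entirely contracted (by the ``last divisor'' argument of Proposition \ref{prop:nlc_vertical}); moreover, since $K_{Y_i} + B_{Y_i} + A_{Y_i} \sim_{\mathbb{Q},g_i} 0$ near $g_i^{-1}(s)$, each step $\mu_i \colon Y_i \to W_i$ is $(K_{Y_i} + B_{Y_i} + A_{Y_i})$-trivial, so Proposition \ref{MMp gpair connectedness} applied to $\mu_i$ on the generalized dlt pair $(Y_i,B_{Y_i},\overline{A_{Y_i}})$ shows $\Supp \lfloor B_{Y_i} \rfloor$ is connected in a neighborhood of every fiber of $\mu_i$, ruling out the merger of two distinct components (whose images on $W_i$ would share the image of the contracted locus).

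Finally, on $Y_N$ the $h$-ampleness of $\lfloor B_{Y_N} \rfloor$ ensures it meets every fiber of $h$ in a non-empty connected ample divisor, and since $g_N$ is a contraction with $g_N^{-1}(s)$ connected, the image $W_s := h(g_N^{-1}(s))$ is connected; as $h|_{\Supp \lfloor B_{Y_N} \rfloor}$ is proper surjective onto $W$ with connected fibers, its preimage $\Supp \lfloor B_{Y_N} \rfloor \cap g_N^{-1}(s)$ is connected. Back-propagating through the MMP via the component preservation above, and then pushing forward through $\phi$, yields the desired connectedness of $\Nklt(X,B) \cap f^{-1}(s)$. The main obstacle is the non-merging step: without the preliminary reduction arranging $E_Y = 0$ near $g^{-1}(s)$, the divisor $K+B+A$ would fail to be $\mu_i$-nef and Proposition \ref{MMp gpair connectedness} could not be applied crepantly to each MMP step.
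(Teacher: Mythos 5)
Your strategy breaks down at its central step, and the failure is caused precisely by the hypothesis of the proposition. You run a $(K_Y+\Theta_Y+A)$-MMP with $\Theta_Y=B_Y-\lfloor B_Y\rfloor$ and justify it by claiming that $K_Y+\Theta_Y+A\sim_{\qq,g}-\lfloor B_Y\rfloor$ is not pseudo-effective over $S$, so that Proposition \ref{prop:non-pseff-termination-2} produces a Mori fiber space $h\colon Y_N\rar W$ on which $\lfloor B_{Y_N}\rfloor$ is $h$-ample. But in this proposition $\Nklt(X,B)$ is assumed \emph{vertical} over $S$, so $\lfloor B_Y\rfloor$ (and $E_Y$) restrict to zero on the generic fiber of $g$; hence $-\lfloor B_Y\rfloor-E_Y$ \emph{is} pseudo-effective over $S$. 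Thus the non-pseudo-effectivity hypothesis fails, Proposition \ref{prop:non-pseff-termination-2} does not apply, and no MMP can terminate with a Mori fiber space on which the (vertical) divisor $\lfloor B\rfloor$ is relatively ample --- the outcome you need is incompatible with verticality. (It is exactly in Step 1 of the proof of Theorem \ref{thm:main-thm}, where $\Nklt$ \emph{dominates} $S$, that this Mori-fiber-space argument is the right one; here the situation is the opposite.) Even setting this aside, your final connectedness argument is incorrect: when $h$ has relative dimension $1$, an $h$-ample divisor meets a fiber in a finite set of points, so ``meets every fiber of $h$ in a non-empty connected ample divisor'' is false. There is also an unjustified claim in your preliminary dichotomy: from $(g')^{-1}(s)\subset\Supp(E_{Y'})$ on the new model you infer $f^{-1}(s)\subset\Nlc(X,B)$, but there is no morphism $Y'\rar X$ and containment of the (possibly much smaller) fiber of $Y'$ in $E_{Y'}$ does not give containment of $f^{-1}(s)$ in $\Nlc(X,B)$. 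Finally, you use connectedness statements about fibers over $s$ while only controlling global connected components of the non-klt locus; some localization (an \'etale base change as in \cite{kollar_singularities}*{4.38}) and a Stein factorization reduction are needed before such bookkeeping is legitimate.

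The paper's proof avoids all of this by a different, much shorter device that exploits verticality rather than fighting it: after reducing to a contraction, passing to a dlt model $(Y,B_Y+E_Y)$, making the \'etale base change, and noting $0<\dim S<\dim X$ (the case $\dim S=\dim X$ being Lemma \ref{lem:ample_to_big_nef}), one chooses an effective Cartier divisor $D$ on $S$ containing the images of all components of $\Nklt(Y,B_Y+E_Y)$ meeting $g^{-1}(s)$ and adds $\epsilon F$ with $F=g^*D$. This leaves the non-klt locus unchanged, keeps $-(K+{\rm boundary})$ relatively nef, and converts every lc center near $g^{-1}(s)$ into a non-lc center, so that $\Nklt=\Nlc$ for the perturbed pair near the fiber and Proposition \ref{prop:nlc_vertical} applies directly. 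If you want to salvage your approach you would need a termination statement for the pseudo-effective generalized klt MMP and a genuinely different endgame (a minimal model on which $-\lfloor B\rfloor$ is relatively nef, as in Proposition \ref{prop:nlc_vertical}), at which point you would essentially be re-proving Proposition \ref{prop:nlc_vertical} with a weaker boundary and without special termination available; the perturbation trick is what makes that unnecessary.
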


\begin{proof}
First, we reduce to the case when $f$ is a contraction.
Arguing by contradiction, assume that $f \sups -1. (s) \cap \Nklt(X,B)$ is disconnected as $k(s)$-scheme.
Let $\phi \colon X \rar Y$ denote the Stein factorization of $f$, and let $\psi \colon Y \rar S$ denote the induced finite morphism.
Since the fiber of $f$ over $s$ is connected, and $\phi$ has geometrically connected fibers, there exists a unique point $y \in Y$ mapping to $s \in S$.
Furthermore, as $f \sups -1. (s) \cap \Nklt(X,B)$ is disconnected, then so is $\phi \sups -1. (y) \cap \Nklt(X,B)$.
Thus, the hypotheses of the statement apply to the morphism $\phi \colon X \rar Y$.
Furthermore, $\Nklt(X,B)$ dominates $S$ if and only if it dominates $Y$.
Therefore, we may assume that the morphism $f$ satisfies the additional property $f_* \O X. = \O S.$.

Let $\pi \colon Y\to X$ be a crepant $\bQ$-factorial dlt model, $B_Y$ and $E_Y$ be effective divisors such that $(Y,B_Y)$ is dlt, $\Supp(E_Y) \subset \lfloor B_Y \rfloor$,
\[
\pi^*(K_X+B)=K_Y+B_Y+E_Y,
\]
and $\Supp(E_Y)=\Nlc(Y,B_Y+E_Y)$.
Since
\[
\pi(\Nklt(Y,B_Y+E_Y)\cap (f\circ\pi)^{-1}(s)) =\Nklt(X,B)\cap f^{-1}(s),
\]
it is enough to prove the statement for $(Y,B_Y+E_Y)$.
For brevity, we write $g= f \circ \pi$.	

In order to prove the statement, we are free to replace $(s \in S)$ by an \'etale cover $(s' \in S') \rar (s \in S)$ such that $k(s)=k(s')$. Thus, arguing as in \cite{kollar_singularities}*{4.38}, we can assume that different connected components of $\Nklt(Y,B_Y+E_Y) \cap g \sups -1. (s)$ correspond to different connected components of $\Nklt(Y,B_Y+E_Y)$.

Notice that, by Lemma \ref{lem:ample_to_big_nef}, we may assume that $\dim S < \dim X$.
Furthermore, the assumption that $\Nklt(X,B)$ is vertical over $S$ forces $\dim S > 0$.
Let $D$ be an effective Cartier divisor on $S$ containing the image under $g$ of all components of $\Nklt(Y,B_Y+E_Y)$ which intersect $g^{-1}(s)$.
Then, let $F=g^*D$ and choose $0< \epsilon \ll 1$ so that $\Nklt(Y,B_Y+E_Y)=\Nklt(Y,B_Y+E_Y+\epsilon F)$.
Then, as $(Y,B_Y)$ is dlt, $\Supp(E_Y) \subset \lfloor B_Y \rfloor$, and $\lfloor B_Y \rfloor \subset \Supp (F)$ in a neighborhood of $g \sups -1. (s)$, it follows that $\Nklt(Y,B_Y+E_Y+\epsilon F)=\Nlc(Y,B_Y+E_Y+\epsilon F)$ in a neighborhood of $g \sups -1. (s)$.
Hence, we are done by Proposition \ref{prop:nlc_vertical}.
\end{proof}

Now, we are ready to address the proof of Theorem \ref{thm:main-thm}.

\begin{proof}[Proof of Theorem \ref{thm:main-thm}]
We will proceed in several steps.

{\bf Step 0:} In this step, we make a few reductions and introduce some notation.

We may assume that $\dim(S)<\dim(X)$ by Lemma \ref{lem:ample_to_big_nef}.
As argued in the proof of Proposition \ref{prop:nklt_dominates}, we may assume that $f \colon X \rar S$ is a contraction.

Let $\pi \colon Y \to X$ be a $\qq$-factorial dlt model of $(X,B)$, and let $g \colon Y\to S$ denote the induced contraction.
Let $(Y,B_Y)$ denote the trace of $(X,B)$ on $Y$, that is, $K_Y+B_Y=\pi^*(K_X+B)$, and define $M \coloneqq -(\K Y. + B_Y)$.
We define $D \coloneqq B_Y \sups \geq 1.$ and $\Delta \coloneqq B_Y - D$.
Then, $D$ and $\Delta$ share no prime components and $(Y,\Supp(D) + \Delta)$ is dlt.
Up to a base change of $S$ with an \'etale neighborhood of $s \in S$, we may assume that each connected component of $g^{-1}(s)\cap \Supp (D)$ corresponds to a connected component of $D$ \cite{kollar_singularities}*{4.38}.
Then, by Lemma \ref{lem:ample_to_big_nef}, we can reduce to the study of $g \colon (Y,B_Y) \rar S$.
We will show that $(Y,B_Y)$ is plt in a neighborhood of $g^{-1}(s)$.

{\bf Step 1:} In this step, we show that $g^{-1}(s)\cap\Nklt(Y,B_Y)$ has two connected components.

Since $M$ is nef over $S$, $(Y,B_Y,\overline M)/S$ is a generalized pair.
{Furthermore, as the $\mathbb Q$-b-Cartier $\mathbb Q$-b-divisor $\overline{M}$ descends onto $Y$, we have $\Nklt(Y,B_Y,\overline{M})=\Nklt(Y,B_Y)=\Supp(D)$.}
By Proposition \ref{prop:nklt_dominates}, $\Supp(D)$ dominates $S$.
Then, $\K Y. + \Delta + M$ is not pseudo-effective over $S$.
Therefore, we can run a $(\K Y. + \Delta + M)$-MMP with ample scaling relative to $S$.
By Proposition \ref{prop:non-pseff-termination-2}, this MMP terminates with a Mori fiber space $h \colon Y' \rar Z$ over $S$.
Let $B_{Y'}$ and $M'$ denote the push-forwards of $B$ and $M$ along the rational contraction $Y \drar Y'$.
Since $K_Y+B_Y+M\sim_{\mathbb{Q},g}0$, it follows that this procedure induces a generalized pair $(Y',B_{Y'},\overline{M})$ that is crepant to $(Y,B_Y,\overline{M})$; see \cite{BZ16}*{Remark 4.2.(7)}.
Notice that we have $\overline{M}_{Y'}=M'$.
Since $\K {Y}. + B_{Y} + M \sim \subs \qq,g. 0$, by Proposition \ref{MMp gpair connectedness}, the connected components of $\Nklt(Y,B_Y,\overline M)$ are in bijection with the connected components of $\Nklt(Y',B_{Y'},\overline M)$.
In particular, the MMP cannot connect different connected components of $\Nklt(Y,B_Y, \overline M) = \Nklt(Y,B_Y)$.

By construction, the MMP is a $(-D)$-MMP.
Therefore, it cannot contract every divisor in a given connected component of $\Supp(D)$.
In particular, every connected component of $\Nklt(Y',B_{Y'},\overline M)$ contains a divisor, and one of these dominates $Z$ and is $h$-ample.
Call this divisor $D'_1$, and notice that, since $Y' \rar Z$ is a Mori fiber space and hence $\rho(Y'/Z)=1$, we may choose it to be reduced and irreducible.
Let $D'_2 \subset \Nklt(Y',B_{Y'},\overline{M})$ be a prime divisor contained in a different connected component.
Recall that this implies $D_1'$ and $D_2'$ are irreducible components of $D'$, {where $D'$ denotes the strict transform of $D$}.
Then, as $D_1'$ is $h$-ample and $D_1' \cap D_2' = \emptyset$, $D_2'$ contains no fiber.
Thus, by dimensional reasons, $D_2'$ dominates $Z$.
Since $h$ is a Mori fiber space, it follows that $D_2'$ is $h$-ample.

Since $D_1' \cap D_2' = \emptyset$, every fiber of $h$ is a curve.
As $\chara (k) > 2$, the geometric generic fiber is $\pr 1.$ by \cite{cascini_tanaka_xu}*{Lemma 6.5}.
In particular, it follows that there are exactly two components $D_1'$ and $D_2'$ of $\Nklt(Y',B_{Y'}, \overline M)$, and that $\mu \subs D_1'. (B_{Y'}) = \mu \subs D_2'. (B_{Y'})=1$ {(where we recall from \S\ref{sub:notation} that $\mu_{D}(B)$ denotes the coefficient of the prime divisor $D$ in $B$)}, $(B_{Y'})^h=D_1'+D_2'$ and that $M'$ is trivial along the generic fiber {of $h$}.
Since we are assuming that $\Nklt(Y',B \subs Y'.,\overline M) \cap Y' \subs s.$ is disconnected, the $h$-vertical components of $D'$ have to be disjoint from $Y' \subs s.$.
Hence, up to shrinking around $s \in S$, we have $B \subs Y'. \sups \geq 1.= B \subs Y'. \sups =1. = D_1' + D_2'$.
In particular, as every connected component of $\Nklt(Y',B_{Y'},\overline M)$ contains a divisor, this shows that $\Nklt(Y,B_Y)$ has exactly two connected components over $s \in S$.

{\bf Step 2:} In this step we show that $(Y,B_Y)$ is log canonical in a neighborhood of $g \sups -1.(s)$ and hence dlt in a neighborhood of $g \sups -1.(s)$.

Assume by contradiction that $(Y,B_Y)$ is not log canonical {in a neighborhood of $g \sups -1.(s)$}.
By Step 1, we know that $\Nklt(Y,B_Y)$ has two connected components over $s \in S$.
Call these components $\Omega_1$ and $\Omega_2$.
From Step 1, we may assume that $D_1 \subset \Omega_1$ and $D_2 \subset \Omega_2$.
Here, $D_1$ and $D_2$ denote the strict transforms on $Y$ of the divisors $D_1'$ and $D_2'$ constructed in Step 1.
Without loss of generality, we may assume that $\Omega_1$ contains some non-log canonical center; that is, we may assume that some prime divisor in $\Omega_1$ has coefficient strictly greater than 1 in $B_Y$.

Now, let $\Sigma$ be the portion of $B_Y \sups \geq 1.$ supported on $\Omega_2$.
Then, let $\Xi$ be the portion of $B_Y \sups > 1.$ supported on $\Omega_1$.
Finally, let $\Gamma$ be the portion of $B_Y \sups =1.$ supported on $\Omega_1$.
Then, we have
\[
\K Y. + \Delta + \Gamma + M \sim \subs \qq,g. -\Sigma - \Xi.
\]
Since $D_2 \subset \Supp(\Sigma)$ dominates $S$, we conclude that $\K Y. + \Delta + \Gamma + M$ is not pseudo-effective over $S$.
Since $(Y,\Delta+\Gamma)$ is dlt and $M$ is nef over $S$, by Proposition \ref{prop:non-pseff-termination-2} we can run a $(\K Y. + \Delta + \Gamma + M)$-MMP with scaling over $S$, which terminates with a Mori fiber space $Y'' \rar W$.

By Proposition \ref{MMp gpair connectedness}, $\Omega_1$ and $\Omega_2$ cannot be connected by this MMP.
Since $\Supp(\Sigma) \subset \Omega_2$ and $\Supp(\Xi) \subset \Omega_1$ are disconnected and this is a $-(\Sigma + \Xi)$-MMP, the divisors $\Sigma$ and $\Xi$ are not completely contracted {and the respective birational transforms $\Sigma''$ and $\Xi''$ are non-zero}.
Now, arguing as in Step 1 for the construction of the divisors $D_1'$ and $D_2'$, it follows that $Y'' \rar W$ is a Mori fiber space with  geometric generic fiber $\pr 1.$, and there are two distinct prime divisors $P_1''$ and $P_2''$ that are relatively ample and that are in the supports of $\Xi''$ and $\Sigma''$, respectively.
Since $\K Y''. + B \subs Y''. + M'' \sim \subs \qq,W. 0$, $\deg (K_{\pr 1.})=-2$, $M''$ is pseudo-effective over $S$, $\mu \subs P_1''.(B \subs Y''.) >1$ and $\mu \subs P_2''.(B \subs Y''.) \geq 1$, we get the sought contradiction by restricting to the geometric generic fiber of $Y'' \rightarrow W$.

{\bf Step 3:} In this step we show that $(Y,B_Y)$ is plt in a neighborhood of $g \sups -1.(s)$.

By Step 2, we know that $(Y,B_Y)$ is dlt in a neighborhood of $g^{-1}(s)$.
In particular, $B_Y \sups \geq 1.= B_Y \sups =1.$.
Recall that $\Nklt(Y,B_Y)$ has two connected components, denoted by $\Omega_1$ and $\Omega_2$.
Furthermore, each $\Omega_i$ contains a prime divisor $D_i$ that dominates $S$.

Now, assume by contradiction that $(Y,B_Y)$ is not plt.
Then, some $\Omega_i$ contains at least two prime divisors.
Without loss of generality, we may assume it is $\Omega_1$.
Let $Q_1$ be this additional prime divisor contained in $\Omega_1$ and distinct from $D_1$. 
As in Step 2, let $\Sigma$ denote the portion of $B_Y \sups =1. = B_Y \sups \geq 1.$ supported on $\Omega_2$.
Let $\Phi$ denote the portion of $B_Y \sups =1.$ supported on $\Omega_1$.
Notice that $D_1+Q_1 \leq \Phi$.
Then, we have
\[
\K Y. + \Delta + \Phi + M \sim \subs \qq,g. -\Sigma.
\]
By Proposition \ref{prop:non-pseff-termination-2}, we may run a $(\K Y. + \Delta + \Phi + M)$-MMP over $S$ with ample scaling, which terminates with a Mori fiber space $u \colon \tilde{Y} \rar U$ over $S$.

As argued in Step 1, $\Omega_1$ and $\Omega_2$ cannot be connected by this MMP.
As the MMP $Y \drar \tilde{Y}$ is a $(-\Sigma)$-MMP, $\Sigma$ cannot be fully contracted: there is some prime component $Q_2$ of $\Sigma$ whose strict transform $\tilde Q _2$ on $\tilde Y$ is $u$-ample.
Now, let $Y_i \rar Y \subs i+1.$ be a divisorial contraction for the MMP $Y \drar \tilde Y$, and let $E_i$ be the exceptional divisor for the contraction.
Then, $E_i$ is covered by curves that are positive against $\Sigma_i$.
Since $\Supp(\Phi_i) \cap \Supp(\Sigma_i) = \emptyset$, it follows that $E_i$ cannot be a prime component of $\Phi_i$.
In particular, no component of $\Omega_1$ is contracted by $Y \drar \tilde Y$.
In particular, $\tilde D _1 \neq 0$ and $\tilde Q _1 \neq 0$, where $\tilde D _1$ and $\tilde Q_1$ denote the strict transforms on $\tilde Y$ of $D_1$ and $Q_1$, respectively.

Since $\tilde Q _2$ is $u$-ample and it is disjoint from $\tilde D _1 + \tilde Q _1$, as in Step 1, it follows that $u$ is generically a $\pr 1.$-fibration and that both $\tilde D _1$ and $\tilde Q _1$ are horizontal over $U$.
Now, as $\K \tilde Y. +  B \subs \tilde Y. + \tilde M \sim \subs \qq,u.0$, $\deg(K_{\pr 1.})=-2$, $\tilde M$ is pseudo-effective over $S$, $\mu \subs \tilde D _1.(B \subs \tilde Y.)=1$, $\mu \subs \tilde Q _1.(B \subs \tilde Y.)=1$, and $\mu \subs \tilde Q _2.(B \subs \tilde Y.)=1$, we get the sought contradiction.

{\bf Step 4:} In this step, we show that $(Y',B_{Y'},\overline{M})/Z$ satisfies {conditions (0)-(5) of} Definition \ref{def p1 link}, where $Y'$ is the model obtained in Step 1.

By the end of Step 1, we have that $B_{Y'}^{\geq 1}=D_1'+D_2'$, where $D_1'$ and $D_2'$ are distinct reduced prime divisors.
This shows that condition (0) in Definition \ref{def p1 link} holds.
By Step 3, $(Y,B_Y)$ is plt.
Since $Y \drar Y'$ is trivial for $\K Y. + B_Y + M$ and it does not contract the two log canonical centers, by Lemma \ref{lem:plt_crepant}, it follows that $(Y',B_{Y'},\overline{M})/Z$ is generalized plt.
Thus, condition (4) in Definition \ref{def p1 link} is satisfied.

By construction, $h \colon Y' \rar Z$ is a Mori fiber space with the property that $\K Y'. + B \subs Y'. \sups =1. \sim \subs \qq,Z. 0$.
As the generalized pair $(Y',B_{Y'},\overline{M})/Z$ is generalized plt and $Y'$ is $\qq$-factorial, the pair $(Y',B \subs Y'.)$ is plt.
Furthermore, we have $\K Y'. + B \subs Y'. + M' \sim \subs \qq,Z. 0$, where we recall that $M'=\overline{M}_{Y'}$ by definition.
Therefore we have $B_{Y'}-B_{Y'}^{=1}\sim_{\qq,Z} -M'$, and hence, as the left-hand side is effective and $M'$ is pseudo-effective over $Z$, by \cite{GNT}*{Theorem 2.9}, we have $B_{Y'}-B_{Y'}^{=1}\sim_{\qq,Z}-M'\sim_{\qq,Z}0$.
Thus, condition (1) in Definition \ref{def p1 link} is satisfied.

Since $(Y',B \subs Y'.)$ is plt and $Y'$ is $\mathbb{Q}$-factorial, each irreducible component of $B_{Y'}^{=1}$ is normal by \cite{BMP+}*{Corollary 7.17}.
Since neither component contains any fiber, the induced morphism to $Z$ is finite and birational.
As $Z$ is normal, this is an isomorphism.
So, condition (3) in Definition \ref{def p1 link} is satisfied.

Since $h$ is a Mori fiber space and it has two disjoint sections, which are necessarily relatively ample, $h$ is equidimensional of relative dimension 1.
Since the generic point of $Z$ has characteristic 0 or strictly greater than 5, and the generic fiber of $g$ has two distinct rational points, it follows that the generic fiber is isomorphic to $\pr 1.$.
Thus, (5) follows.

Now, let $Y^r \rar Y'$ be a common resolution of $Y$ and $Y'$.
In particular, $\overline{M}$ descends onto $Y^r$.
As $Y' \rar Z$ has relative dimension 1, $Y^r \rar Y'$ is an isomorphism over a non-empty open subset of $Z$.
Let $Y^f \rar Z^f$ be a flattening of $Y^r \rar Z$, see {\cite{raynaud_gruson}*{Theorem 5.2.2}}, and let $\widetilde{Z}$ be a resolution of $Z^f$.
Now let $\widetilde{Y}$ be the normalization of the main component of $Y^f\times_{Z^f}\widetilde{Z}$.
This ensures that $\widetilde{Y}\to\widetilde{Z}$ still has equidimensional fibers, so by the proof of \cite{Wit17}*{Lemma 2.18} in our more general situation, it follows that $\overline{M}_{\widetilde{Y}} \sim \subs \qq,\widetilde{Z}. 0$ 
Thus, condition (2) in Definition \ref{def p1 link} is satisfied.

{\bf Step 5:} In this step, we conclude the proof by showing that, in the equicharacteristic case with perfect residue field, condition ($5'$) of Definition \ref{def p1 link} holds.
{Throughout this step, we assume that $f \colon X \rar S$ is a projective morphism between varieties defined over a perfect field of characteristic $p>5$.}

We continue with the notation of Step 4.
To show condition {($5'$)} in Definition \ref{def p1 link} we argue as in the proof of \cite{kollar_singularities}*{Proposition 4.37}.
By \cite{Ber20}*{Theorem 22}, 
we have that $R^i h_* \O Y'.=0$ for $i \geq 1$.
Thus, $H^1(Y'_t,\O Y'_t.)=0$ for every $t \in Z$ by Lemma \ref{lemma_tree_curves}.
Thus, $(Y'_t)_\mathrm{red}$ is a tree of smooth rational curves for every $t \in Z$.
Since all the connected components of $Y'_t$ have to meet at the two intersection points of $Y'_t \cap B \subs Y'. \sups =1.$, it follows that the tree consists of just one curve.
Thus, condition {($5'$)} in Definition \ref{def p1 link} holds.
\end{proof}

\begin{lemma}\label{lemma_tree_curves}
Let $f \colon X \rar Y$ be a projective morphism of noetherian schemes with $Y$ integral.
Assume that $\dim X_y=1$ for all $y \in Y$ and that $R^1f_* \mathcal{O}_X=0$.
Then, we have $H^1(X_y,\mathcal{O}_{X_y})=0$ for all $y \in Y$.
\end{lemma}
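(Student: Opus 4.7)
The plan is to reduce to a local computation and then combine the theorem on formal functions with a d\'evissage exploiting the one-dimensionality of the fibers. Since the formation of $R^1 f_* \mathcal{O}_X$ commutes with flat base change, we may localize at $y$ and assume $Y = \Spec(A)$, where $(A,\mathfrak{m})$ is a noetherian local ring with residue field $k(y)$; the hypothesis then becomes $H^1(X, \mathcal{O}_X) = 0$, and the goal is to prove that $H^1(X_y, \mathcal{O}_{X_y}) = 0$. Note also that, since $R^1 f_* \mathcal{O}_X$ is coherent by the Grothendieck pushforward theorem, its vanishing is equivalent to the vanishing of its $\mathfrak{m}$-adic completion at $y$.

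For each $n \geq 1$, let $X_n \subset X$ denote the closed subscheme defined by $\mathfrak{m}^n \mathcal{O}_X$, so that $X_1 = X_y$. The short exact sequence of coherent sheaves
\[
0 \rar \mathfrak{m}^n \mathcal{O}_X / \mathfrak{m}^{n+1} \mathcal{O}_X \rar \mathcal{O}_{X_{n+1}} \rar \mathcal{O}_{X_n} \rar 0
\]
exhibits its kernel as being annihilated by $\mathfrak{m}$, hence as a coherent sheaf on $X_y$. Since $\dim X_y = 1$, Grothendieck vanishing gives $H^2(X, \mathfrak{m}^n \mathcal{O}_X / \mathfrak{m}^{n+1} \mathcal{O}_X) = 0$, and the associated long exact sequence produces surjections $H^1(X_{n+1}, \mathcal{O}_{X_{n+1}}) \twoheadrightarrow H^1(X_n, \mathcal{O}_{X_n})$ for every $n \geq 1$.

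Finally, the theorem on formal functions supplies an isomorphism
\[
\lim_{n} H^1(X_n, \mathcal{O}_{X_n}) \cong (R^1 f_* \mathcal{O}_X)^\wedge_y,
\]
and the right-hand side vanishes by the reduction above. Since the transition maps in the inverse system on the left are surjective, any element of a fixed $H^1(X_n, \mathcal{O}_{X_n})$ lifts step-by-step to a compatible family, hence defines an element of the inverse limit; as the limit is zero, each individual group must vanish. Specializing to $n = 1$ yields the desired conclusion $H^1(X_y, \mathcal{O}_{X_y}) = 0$. The only real point to keep in mind is that $f$ is not assumed flat, so the off-the-shelf cohomology-and-base-change machinery built out of a Grothendieck complex of flat modules does not apply directly; this is precisely why we route the argument through formal functions and the dimension bound on the fibers.
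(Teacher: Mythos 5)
Your argument is correct, but it takes a genuinely different route from the paper. The paper argues by d\'evissage on the base: for any closed subscheme $Z \subset Y$ it uses the sequence $0 \rar \mathcal{I}_{X_Z} \rar \O X. \rar \O X_Z. \rar 0$ together with $R^2 f_* \mathcal{I}_{X_Z} = 0$ (which follows from the fibers having dimension $\leq 1$, via \cite{Har77}*{Corollary III.11.2}) to conclude $R^1 f_* \O X_Z. = 0$; this settles the case of closed points at once, and non-closed points are then handled by replacing $Y$ with $\overline{\{y\}}$, invoking generic flatness, and applying the semicontinuity theorem \cite{Har77}*{Theorem III.12.8}. You instead localize at $y$ and apply the theorem on formal functions: the vanishing of $R^1 f_* \O X.$ gives $\varprojlim_n H^1(X_n, \mathcal{O}_{X_n}) = 0$, while the kernels $\mathfrak{m}^n\mathcal{O}_X/\mathfrak{m}^{n+1}\mathcal{O}_X$ are supported on the one-dimensional fiber $X_y$, so Grothendieck vanishing kills their $H^2$ and makes the transition maps of the inverse system surjective; surjectivity plus vanishing of the limit forces each term, in particular $H^1(X_y,\mathcal{O}_{X_y})$, to vanish. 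Your step-by-step lifting argument is the standard fact that an $\mathbb{N}$-indexed inverse system with surjective transition maps surjects onto each of its terms, so the logic is sound, and the identification of the formal-functions transition maps with the restriction maps induced by $\mathcal{O}_{X_{n+1}} \twoheadrightarrow \mathcal{O}_{X_n}$ is as you implicitly use it. What each approach buys: yours treats all points $y$ uniformly, needs the fiber-dimension hypothesis only at the chosen point, does not use that $Y$ is integral, and avoids generic flatness and semicontinuity entirely (so it extends verbatim to proper morphisms); the paper's proof avoids formal functions and completions, staying within the relative-dimension vanishing and semicontinuity package, at the cost of the reduction through closed points, closures, and generic flatness. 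Both proofs use $\dim X_y = 1$ in the same essential way, namely to kill a second cohomology (your $H^2$ on the fiber, the paper's $R^2 f_*$).
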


\begin{proof}
Since the question is local on the base, we may assume that $Y$ is affine.
Now, let $Z \subset Y$ be a closed subset, and let $X_Z$ denote $X \times_Y Z$.
The short exact sequence
\[
0 \rar \mathcal{I}_{X_Z} \rar \O X. \rar \O X_{Z}. \rar 0
\]
induces a long exact sequence of higher direct images
\[
\ldots R^1f_* \O X. \rar R^1 f_* \O X_{Z}. \rar R^2 f_*\mathcal{I}_{X_{Z}} \rar \ldots.
\]
By \cite{Har77}*{Corollary III.11.2} and the assumption on the dimension of the fibers, we have $R^2 f_*\mathcal{I}_{X_{Z}}=0$.
Then, by $R^1f_*\O X.=0$, it follows that $R^1 f_* \mathcal{O}_{X_Z} =0$.
Notice that this settles the statement in the case $y$ is a closed point. 

The above shows that the assumptions of the statement are preserved if we replace $Y$ and $X$ by $Z$ and $X_Z$, where $Z \subset Y$ is an integral closed subscheme.
Now, let $y \in Y$ be a point.
By the previous paragraph, we may assume that $y$ is not a closed point.
Then, let $\overline{\{ y \}}$ denote the closure of $\{y\}$ in $Y$.
Since $\overline{\{ y\}}$ is an integral subscheme of $Y$, by the above argument, we may replace $Y$ with $\overline{\{ y\}}$.
Thus, up to relabelling, we may assume that $y$ is the generic point of $Y$.
Since $Y$ is integral, we may apply generic flatness.
So, up to shrinking $Y$, we may assume that $X \rar Y$ is flat.
Then, as the claim is settled over closed points, by upper semi-continuity \cite{Har77}*{Theorem III.12.8}, it follows that $H^1(X_y,\O X_y.)=0$.
\end{proof}

We conclude with the proofs of the corollaries to the main statement.

\begin{proof}[Proof of Corollary \ref{P1_link}]
The proof of \cite{filipazzi_svaldi}*{Theorem 1.4} goes through verbatim.
{To this end, we observe that the notion of weak $\pr 1.$-link suffices for the argument to go through.}
\end{proof}

\begin{proof}[Proof of Corollary \ref{kollar_441}]
{In view of Corollary \ref{P1_link},} this follows exactly the same argument as \cite{kollar_singularities}*{Corollary 4.41}.
\end{proof}

\begin{bibdiv}
\begin{biblist}

\bib{ahk}{article}{
   author={Alexeev, Valery},
   author={Hacon, Christopher D.},
   author={Kawamata, Yujiro},
   title={Termination of (many) 4-dimensional log flips},
   journal={Invent. Math.},
   volume={168},
   date={2007},
   number={2},
   pages={433--448},
   issn={0020-9910},
   review={\MR{2289869}},
   doi={10.1007/s00222-007-0038-1},
}

\bib{Ber20}{article}{
   author={Bernasconi, Fabio},
   author={Koll\'{a}r, J\'{a}nos},
   title={Vanishing theorems for three-folds in characteristic $p>5$},
   journal={Int. Math. Res. Not. IMRN},
   date={2023},
   number={4},
   pages={2846--2866},
   issn={1073-7928},
   review={\MR{4565629}},
   doi={10.1093/imrn/rnab316},
}

\bib{BMP+}{article}{
 author = {Bhatt, Bhargav},
 author = {Ma, Linquan},
 author = {Patakfalvi, Zsolt},
 author = {Schwede, Karl},
 author = {Tucker, Kevin},
 author = {Waldron, Joe},
 author = {Witaszek, Jakub},
 title={Globally $+$-regular varieties and the minimal model program for threefolds in mixed characteristic},
 journal={Publ. Math. Inst. Hautes \'{E}tudes Sci.},
 date={2023},
 issn={0073-8301},
 doi={10.1007/s10240-023-00140-8},
}

\bib{birkar_p}{article}{
   author={Birkar, Caucher},
   title={Existence of flips and minimal models for 3-folds in char $p$},
   language={English, with English and French summaries},
   journal={Ann. Sci. \'{E}c. Norm. Sup\'{e}r. (4)},
   volume={49},
   date={2016},
   number={1},
   pages={169--212},
   issn={0012-9593},
   review={\MR{3465979}},
   doi={10.24033/asens.2279},
}

\bib{birkar_complements}{article}{
   author={Birkar, Caucher},
   title={Anti-pluricanonical systems on Fano varieties},
   journal={Ann. of Math. (2)},
   volume={190},
   date={2019},
   number={2},
   pages={345--463},
   issn={0003-486X},
   review={\MR{3997127}},
   doi={10.4007/annals.2019.190.2.1},
}

\bib{B20}{misc}{
    author={Birkar, Caucher},
    title={On connectedness of non-klt loci of singularities of pairs},
    year={2020},
    note={https://arxiv.org/abs/2010.08226v1, to appear in J. Differential Geom.},
}
\bib{BZ16}{article}{
   author={Birkar, Caucher},
   author={Zhang, De-Qi},
   title={Effectivity of Iitaka fibrations and pluricanonical systems of
   polarized pairs},
   journal={Publ. Math. Inst. Hautes \'{E}tudes Sci.},
   volume={123},
   date={2016},
   pages={283--331},
   issn={0073-8301},
   review={\MR{3502099}},
   doi={10.1007/s10240-016-0080-x},
}

\bib{cascini_tanaka_xu}{article}{
   author={Cascini, Paolo},
   author={Tanaka, Hiromu},
   author={Xu, Chenyang},
   title={On base point freeness in positive characteristic},
   language={English, with English and French summaries},
   journal={Ann. Sci. \'{E}c. Norm. Sup\'{e}r. (4)},
   volume={48},
   date={2015},
   number={5},
   pages={1239--1272},
   issn={0012-9593},
   review={\MR{3429479}},
   doi={10.24033/asens.2269},
}

\bib{filipazzi_svaldi}{article}{
   author={Filipazzi, Stefano},
   author={Svaldi, Roberto},
   title={On the connectedness principle and dual complexes for generalized
   pairs},
   journal={Forum Math. Sigma},
   volume={11},
   date={2023},
   pages={Paper No. e33, 39},
   review={\MR{4580302}},
   doi={10.1017/fms.2023.25},
}

\bib{GNT}{article}{
   author={Gongyo, Yoshinori},
   author={Nakamura, Yusuke},
   author={Tanaka, Hiromu},
   title={Rational points on log Fano threefolds over a finite field},
   journal={J. Eur. Math. Soc. (JEMS)},
   volume={21},
   date={2019},
   number={12},
   pages={3759--3795},
   issn={1435-9855},
   review={\MR{4022715}},
   doi={10.4171/JEMS/913},
}

\bib{hacon_han}{article}{
   author={Hacon, Christopher D.},
   author={Han, Jingjun},
   title={On a connectedness principle of Shokurov--Koll\'{a}r type},
   journal={Sci. China Math.},
   volume={62},
   date={2019},
   number={3},
   pages={411--416},
   issn={1674-7283},
   review={\MR{3905556}},
   doi={10.1007/s11425-018-9360-5},
}

\bib{hacon_witaszek}{article}{
   author={Hacon, Christopher D.},
   author={Witaszek, Jakub},
   title={On the relative minimal model program for fourfolds in positive
   and mixed characteristic},
   journal={Forum Math. Pi},
   volume={11},
   date={2023},
   pages={Paper No. e10, 35},
   review={\MR{4565409}},
   doi={10.1017/fmp.2023.6},
}

\bib{Har77}{book}{
   author={Hartshorne, Robin},
   title={Algebraic geometry},
   note={Graduate Texts in Mathematics, No. 52},
   publisher={Springer-Verlag, New York-Heidelberg},
   date={1977},
   pages={xvi+496},
   isbn={0-387-90244-9},
   review={\MR{0463157}},
}

\bib{Kol92}{book}{
   Editor = {Koll\'ar, J\'anos},
   title={Flips and abundance for algebraic threefolds},
   note={Papers from the Second Summer Seminar on Algebraic Geometry held at
   the University of Utah, Salt Lake City, Utah, August 1991;
   Ast\'{e}risque No. 211 (1992)},
   publisher={Soci\'{e}t\'{e} Math\'{e}matique de France, Paris},
   date={1992},
   pages={1--258},
   issn={0303-1179},
   review={\MR{1225842}},
}

\bib{kollar_singularities}{book}{
   author={Koll\'{a}r, J\'{a}nos},
   title={Singularities of the minimal model program},
   series={Cambridge Tracts in Mathematics},
   volume={200},
   note={With a collaboration of S\'{a}ndor Kov\'{a}cs},
   publisher={Cambridge University Press, Cambridge},
   date={2013},
   pages={x+370},
   isbn={978-1-107-03534-8},
   review={\MR{3057950}},
   doi={10.1017/CBO9781139547895},
}

\bib{KM98}{book}{
   author={Koll\'{a}r, J\'{a}nos},
   author={Mori, Shigefumi},
   title={Birational geometry of algebraic varieties},
   series={Cambridge Tracts in Mathematics},
   volume={134},
   note={With the collaboration of C. H. Clemens and A. Corti;
   Translated from the 1998 Japanese original},
   publisher={Cambridge University Press, Cambridge},
   date={1998},
   pages={viii+254},
   isbn={0-521-63277-3},
   review={\MR{1658959}},
   doi={10.1017/CBO9780511662560},
}

\bib{kollar_witaszek}{misc}{
 author = {Koll\'{a}r, J\'{a}nos},
 author = {Witaszek, Jakub},
 title={Resolution and alteration with ample exceptional divisor},
 year = {2021},
 note = {https://arxiv.org/abs/2102.03162v1},
}

\bib{NT20}{article}{
   author={Nakamura, Yusuke},
   author={Tanaka, Hiromu},
   title={A Witt Nadel vanishing theorem for threefolds},
   journal={Compos. Math.},
   volume={156},
   date={2020},
   number={3},
   pages={435--475},
   issn={0010-437X},
   review={\MR{4053458}},
   doi={10.1112/s0010437x1900770x},
}

\bib{raynaud_gruson}{article}{
   author={Raynaud, Michel},
   author={Gruson, Laurent},
   title={Crit\`eres de platitude et de projectivit\'{e}. Techniques de
   ``platification'' d'un module},
   language={French},
   journal={Invent. Math.},
   volume={13},
   date={1971},
   pages={1--89},
   issn={0020-9910},
   review={\MR{308104}},
   doi={10.1007/BF01390094},
}

\bib{Sho}{article}{
   author={Shokurov, Vyacheslav V.},
   title={Three-dimensional log perestroikas},
   language={Russian},
   journal={Izv. Ross. Akad. Nauk Ser. Mat.},
   volume={56},
   date={1992},
   number={1},
   pages={105--203},
   issn={1607-0046},
   translation={
      journal={Russian Acad. Sci. Izv. Math.},
      volume={40},
      date={1993},
      number={1},
      pages={95--202},
      issn={1064-5632},
   },
   review={\MR{1162635}},
   doi={10.1070/IM1993v040n01ABEH001862},
}

\bib{Wit17}{article}{
   author={Witaszek, Jakub},
   title={On the canonical bundle formula and log abundance in positive
   characteristic},
   journal={Math. Ann.},
   volume={381},
   date={2021},
   number={3-4},
   pages={1309--1344},
   issn={0025-5831},
   review={\MR{4333416}},
   doi={10.1007/s00208-021-02231-5},
}

\end{biblist}
\end{bibdiv}

\end{document}